\numberwithin{equation}{section}
\newtheorem{theorem}{Theorem}[section]
\newtheorem{thm}{Theorem}[section]
\newtheorem{lem}{Lemma}[section]
\newtheorem{algorithm}[thm]{Algorithm}
\newtheorem{remark}[thm]{Remark}
\begin{document}

\title{An efficient splitting iteration for a CDA-accelerated solver for incompressible flow problems}

\author{
Victoria L. Fisher \thanks{\small School of Mathematical and Statistical Sciences, Clemson University, Clemson, SC, 29364. Partially supported by Department of Energy grant DMS DE-SC0025292 (vluongo@clemson.edu)}
\and
Leo G. Rebholz\thanks{\small School of Mathematical and Statistical Sciences, Clemson University, Clemson, SC, 29364.  Partially supported by Department of Energy grant DMS DE-SC0025292  (rebholz@clemson.edu)}
\and 
Duygu Vargun\thanks{\small Computer Science and Mathematics Division, Oak Ridge National Laboratory, Oak Ridge, TN 37831. Partially supported by Department of Energy grant DE-AC05-00OR22725 (vargund@ornl.gov)\\~\\
This research is based upon work supported by the U.S. Department of Energy (DOE) Office of Science, Office of Advanced Scientific Computing Research, as part of their Applied Mathematics Research Program. The work was performed at the Oak Ridge National Laboratory, which is managed by UT-Battelle, LLC under Contract No. DE-AC05-00OR22725. The U.S. Government retains and the publisher, by accepting the article for publication, acknowledges that the U.S. Government retains a non-exclusive, paid-up, irrevocable, world-wide license to publish or reproduce the published form of this manuscript, or allow others to do so, for the U.S. Government purposes. The DOE will provide public access to these results of federally sponsored research in accordance with the DOE Public Access Plan  (http://energy .gov /downloads /doe -public -access -plan).}
}
\maketitle

\begin{abstract}
We propose, analyze, and test an efficient splitting iteration for solving the incompressible, steady Navier-Stokes equations in the setting where partial solution data is known.   The (possibly noisy) solution data is incorporated into a Picard-type solver via continuous data assimilation (CDA).  Efficiency is gained over the usual Picard iteration through an algebraic splitting of Yosida-type that produces easier linear solves, and accuracy/consistency is shown to be maintained through the use of an incremental pressure and grad-div stabilization.    We prove that CDA scales the Lipschitz constant of the associated fixed point operator by $H^{1/2}$, where $H$ is the characteristic spacing of the known solution data.  This implies that CDA accelerates an already converging solver (and the more data, the more acceleration) and enables convergence of solvers in parameter regimes where the solver would fail (and the more data, the larger the parameter regime).  Numerical tests illustrate the theory on several benchmark test problems and show that the proposed efficient solver gives nearly identical results in terms of number of iterations to converge; in other words, the proposed solver gives an efficiency gain with no loss in convergence rate.
\end{abstract}

\section{Introduction}

Predicting fluid flow behavior is a ubiquitous task across many areas of science and engineering, and the typical approach is to numerically solve the governing equations for the particular flow.  The most common type, and the type we consider herein, is incompressible Newtonian flows, which are governed by the Navier-Stokes equations (NSE).  To this end, we consider herein nonlinear solvers for the steady NSE, which are given in a domain $\Omega \subset \mathbb{R}^d$ with $d =2, 3$ by
\begin{align}
   \label{eq1} u \cdot \nabla u + \nabla p - \nu \Delta u &= f,\\
   \label{eq2} \nabla \cdot u &= 0, \\
   \label{eq3} u |_{\partial \Omega} &= 0,
\end{align}
where $u$ represents the velocity, $p$ represents the pressure, $\nu$ is the kinematic viscosity, and $f$ is an external forcing.  While we consider the case of homogeneous Dirichlet boundary conditions in our analysis for simplicity, the analysis can be extended in a straightforward manner to nonhomogeneous mixed Dirichlet/Neumann boundary conditions.  Additionally, while we consider the case of nonlinear solvers for the steady NSE, our analysis is easily extendable to the nonlinear problems that arise at each time step of a time dependent NSE simulation.

Efficiently solving the NSE system \eqref{eq1}-\eqref{eq3} can be difficult, particularly when the Reynolds number $Re\sim \nu^{-1}$ gets larger.  A commonly used solver for the NSE is the Picard iteration, defined by
\begin{align}
   \label{pic1} u_{k-1} \cdot \nabla u_k + \nabla p_k - \nu \Delta u_k &= f,\\
   \label{pic2} \nabla \cdot u_k &= 0.
\end{align}
For problem parameters satisfying a particular smallness condition, which implies that the weak steady NSE are unique (more on this in Section 2), it is known that the Picard iteration is globally linear convergent \cite{GR86}.  This is in stark contrast to the Newton iteration which can converge quadratically but requires very good initial guesses to do so; as $Re$ increases, the initial guesses need to be even better.  Recent improvements to these solvers include Anderson acceleration applied to Picard \cite{PRX19,PR25} and nonlinear preconditioning of Newton with Picard or Anderson accelerated Picard \cite{PRTX25}, both of which can significantly improve convergence properties.  

We consider herein the setting where partial solution data is known, that is, the NSE velocity solution is known at $u(x_i)$ for $i=1,2,...,N$.  This solution data may come from physical measurements or perhaps from the holder of a very large scale HPC solution trying to pass the solution to another interested party, but the data is too large to send, so only partial data can be sent.  The partial solution data may also have noise or inaccuracies that can arise, for example, from measurement error or computational error.  One approach to incorporating the data into the nonlinear solvers proposed in \cite{LHRV23,GLNR24} is to use continuous data assimilation (CDA).   The CDA-Picard algorithm takes the form
	\begin{align*}
		-\nu \Delta u_{k}+u_{k-1}\cdot\nabla u_{k}+ \nabla p_{k} + \mu I_H(u_{k} - u)&={f}, \\
		\nabla\cdot {u}_{k}&=0.
	\end{align*}
Here $\mu>0$ is a user-defined nudging parameter, and $I_H$ is an appropriate interpolant with characteristic known partial solution data spacing of $H$ (more discussion of $I_H$ is given in Section 2).  We note that while the true solution $u$ remains unknown, $I_H(u)$ is known.   It is proven in \cite{LHRV23} that CDA scales the Lipschitz constant of the Picard fixed point operator by $H^{1/2}$, which shows that CDA accelerates convergence and enlarges the convergence basin with respect to $Re$.  For Newton, CDA is proven in \cite{LHRV23} to expand the convergence basin by $H^{-1/2}$, allowing Newton to be more robust with respect to $Re$ and initial guesses.   

The case of noisy data in CDA-Picard was studied in \cite{GLNR24}, and there it was found that the CDA-Picard convergence properties still held for the nonlinear residual.   This is essentially due to the noise being the same at each iteration and canceling out in the error equation, which thus resembles the case of no noise.  For the error, it was proven that convergence happens at the same rate as no noise but is stopped by a lower bound that scales with the size of the noise.

The purpose of this paper is to study a more efficient implementation of CDA-Picard \footnote{The same ideas we expect will also improve CDA-Newton, but this is nontrivial and will be considered in a separate work.}.  A major difficulty in solving Picard or CDA-Picard is the need at each iteration to solve a linear system of the form
\begin{equation} \label{system}
    \begin{pmatrix}
        A_k & B\\
        B^T & 0
    \end{pmatrix} 
    \begin{pmatrix}
        \hat{u_k}\\
        \hat{p_k}
    \end{pmatrix}  = 
    \begin{pmatrix}
        F\\
        0
    \end{pmatrix},
\end{equation}
where $\hat{u}_k$ and $\hat{p}_k$ represent the coefficient vectors of $u_k$ and $p_k$.  While there has been some progress to solving this type of system in recent years, e.g. \cite{benzi,elman:silvester:wathen,FMSW21}, it remains a difficult problem, and generally these linear system solves are by far the most computationally costly part of the simulation.  To alleviate this issue for the Picard iteration, a modification was proposed in \cite{RVX19} based on rewriting the Picard iteration in terms of an incremental pressure, using a Yosida-type algebraic splitting, and using grad-div stabilization to `absorb' the splitting error.  This iteration, called Incremental Picard Yosida (IPY), was analyzed and tested in \cite{RVX19}, and it was shown to work just as well as Picard in terms of convergence, but each iteration of IPY is much more efficient.  We note that while Picard {\it may} already use grad-div stabilization depending on the finite element choice (e.g., for Taylor-Hood it should be used \cite{JLMNR17}; for equal order it can be used, but the parameter choice is important \cite{GLRW12,HO25,JJLR13}), IPY {\it must} use grad-div stabilization.

The efficiency gain in IPY comes from its linear systems, which are given by 
\begin{equation} \label{system2}
    \begin{pmatrix}
        A_k & 0\\
        B^T & -B^T\tilde A^{-1}B
    \end{pmatrix} 
    \begin{pmatrix}
        I & A_k^{-1}B\\
        0 & I
    \end{pmatrix}
    \begin{pmatrix}
        \hat{u_k}\\
        \hat{\delta}^p_k
    \end{pmatrix}  = 
    \begin{pmatrix}
        F\\
        0
    \end{pmatrix},
\end{equation}
where $\hat\delta_k^p$ represents the coefficient vectors of $p_k - p_{k-1}$ and $\tilde A$ is the symmetric part of $A_k$.  Note that $\tilde A$ can be equivalently thought of as the matrix made from velocity contributions except for the (skew-symmetric) nonlinear term $u_{k-1}\cdot\nabla u_k$, and thus $\tilde A$ and $B^T \tilde A^{-1} B$ are symmetric positive definite (assuming $B$ is full column rank). Thus, this Stokes Schur complement can be easily solved with a conjugate gradient (CG) outer solver preconditioned with e.g. the pressure mass matrix \cite{benzi,HR13,BB12}, and CG can also be used as the inner solver.   Note that if $B$ is not full column rank, then the extra pressure degree of freedom in the system creates a single 0 eigenvalue in $B^T \tilde A^{-1} B$, but this does not cause issues in iterative linear solves \cite{benzi,elman:silvester:wathen}.   A critical feature of IPY is that $B^T \tilde A^{-1} B$ is exactly the same matrix at each iteration. This is in stark contrast to the Picard linear systems \eqref{system}, which have a Schur complement of  $B^T A_k^{-1} B$ that changes at each iteration.  Hence, IPY preconditioners for $\tilde A$ and $B^T \tilde A^{-1} B$ need to be constructed just once, while Picard preconditioners need to be constructed many times.

Thus, in order to gain efficiency over CDA-Picard, we consider the modified iteration CDA-IPY.  Although the high-level matrix structures of CDA-Picard and CDA-IPY are the same as Picard \eqref{system} and IPY \eqref{system2}, respectively, the differences from CDA are the nudging terms $\mu I_H u_{k+1}$ contributing to $A_k$ and $\tilde A$, and $\mu I_H u$ contributing to $F$.  While the efficiency gains of CDA-IPY over CDA-Picard are obvious, it is not all clear that the analysis of \cite{RVX19} that shows that IPY converges nearly as fast as Picard can be combined with the analysis that shows that CDA improves Picard's convergence rate by $H^{1/2}$.  Herein, we perform this analysis, which turns out to be rather technical but does yield the desired results: CDA is found to improve the convergence rate in IPY by a scaling of $H^{1/2}$, and it gives a similar convergence rate as CDA-Picard in \cite{LHRV23}.  However, our analysis reveals that the constants must be chosen carefully.  In particular, in CDA-Picard, one may choose $\mu$ to be large, but in CDA-IPY $\mu$ should be chosen $O(1)$.  We also analyze CDA-IPY in the case where the partial solution data is noisy; here we find that the CDA-IPY error and residual converge the same as when the data has no noise, but for the CDA-IPY error there is a lower bound on convergence that is on the order of the size of the noise in $L^2$.  Provided that the level of noise as a ratio of the solution is not large, we show that using the CDA-IPY ``solution'' with noisy data can be used to effectively seed Newton (without any CDA), even at large $Re$.

This paper is arranged as follows.  In Section 2, we give mathematical preliminaries and notation to make for a smoother analysis to follow.  Section 3 proves the convergence estimates for CDA-IPY in the cases of noise-free and noisy partial solution data.  Section 4 gives results of several numerical tests that illustrate the theory.  Finally, conclusions are drawn in Section 5.

\section{Mathematical Preliminaries}
\indent We consider a domain $\Omega \subset \mathbb{R}^d$, $d = 2, 3$, and denote the $L^2(\Omega)$ norm as $|| \cdot ||$ and inner product as $(\cdot, \cdot)$. All other norms will be labeled with subscripts.

Define the velocity and pressure spaces to be
\begin{align*}
    X & := H_0^1(\Omega) = \{ v\in H^1(\Omega),\ v|_{\partial\Omega}=0 \}, \\
    Q & := L^2_0(\Omega) = \{ q\in L^2(\Omega),\ \int_{\Omega} q=0 \}.
\end{align*}
We also define a divergence-free velocity space by
\begin{equation*}
    V := \{v \in X : (\nabla \cdot v, q) = 0, \forall q \in Q\}.
\end{equation*}
Recall that the Poincar\' e-Friedrichs inequality holds in $X$: there exists $C_P > 0$ that depends only on the size of $\Omega$ such that for every $v \in X$,
\begin{equation}
    \label{prelimeq7} ||v|| \leq C_P||\nabla v||.
\end{equation}
The dual space of $X$ will be denoted as $X'$, and the associated norm of $X'$ will be denoted as $||\cdot ||_{-1}$.  We use the notation $\langle \cdot, \cdot \rangle$ for the dual pairing of $X$ and $X'$.

\subsection{Finite Element Preliminaries}

\indent We consider a regular conforming triangulation $\tau_h$ of $\Omega$, where $h$ denotes the maximum element diameter. We denote $P_k$ as the space of degree $k \geq 1$ piecewise continuous polynomials on $\tau_h$. Likewise, we will denote $P_k^{disc}$ as the space of degree $k \geq 0$ piecewise discontinuous polynomials on $\tau_h$. 

We use discrete velocity-pressure spaces $(X_h, Q_h) \subset (X, Q)$ such that the LBB condition holds: there exists a constant $\beta$ that is independent of $h$ and satisfies
\begin{equation}
    \label{prelimeq6} \inf_{q \in Q_h} \sup_{v \in X_h} \frac{(\nabla \cdot v, q)}{||q||||\nabla v||} \geq \beta > 0.
\end{equation}
We also define the discretely divergence-free velocity space by 
\begin{equation*}
    V_h := \{v_h \in X_h : (\nabla \cdot v_h, q_h) = 0, \forall q_h \in Q_h\}.
\end{equation*}
Regarding the LBB condition, it is known, for example, that $(P_k,P_{k-1})$ $(k\ge 2)$ Taylor-Hood elements satisfy it \cite{BS08} as do $(P_k,P_{k-1}^{disc})$ Scott-Vogelius elements when $k=d$, and the mesh is constructed as a barycenter refinement of a triangular or tetrahedral mesh \cite{arnold:qin:scott:vogelius:2D,zhang:scott:vogelius:3D,JLMNR17}.  Our numerical tests will use these element choices; however, many other element choices are possible, see e.g. \cite{JLMNR17,BS08,BGHRR25}.

\subsection{NSE Preliminaries}

Define a skew-symmetric, trilinear operator $b: X \times X \times X \to \mathbb{R}$ as
\begin{equation*}
    b(\chi, v, w) = \frac{1}{2}(\chi \cdot \nabla v, w) - \frac{1}{2}(\chi \cdot \nabla w, v).
\end{equation*}
An important property of $b$ is that $b(\chi,v,v)=0$.  While we use here the classical skew-symmetric nonlinear form developed by a young Roger Teman \cite{T68}, the same results would hold for the rotational form \cite{GS98}, EMAC form \cite{CHOR17}, or simply the convective form if $\chi$ is restricted to $V$.

The following inequalities hold for $b$ \cite{Laytonbook}: there exists a constant $M$ that is only dependent on $\Omega$ such that
\begin{align}
    \label{prelimeq1} b(u, w, v) &\leq M ||u||^{\frac{1}{2}} ||\nabla u ||^{\frac{1}{2}} ||\nabla w|| ||\nabla v||,\\
    \label{prelimeq2} b(u, w, v) &\leq M ||\nabla u || ||\nabla w|| ||\nabla v||.
\end{align}

The NSE can now be written in finite element (FE) form as follows: find $(u, p) \in (X_h, Q_h)$ satisfying for all $(v, q) \in (X_h, Q_h)$, 
\begin{align}
   \label{eq4} \nu (\nabla u, \nabla v) + b(u, u, v) - (p, \nabla \cdot v)  + \gamma(\nabla \cdot u,\nabla \cdot v)&= \langle f, v \rangle,\\
   \label{eq5} (\nabla \cdot u, q) &= 0.
\end{align}
Note that the FE form of the NSE \eqref{eq4}-\eqref{eq5} contains a grad-div stabilization term with parameter $\gamma\ge 0$.  This term arises from adding $0=-\gamma\nabla (\nabla \cdot u)=0$ to the NSE. It is well known to penalize the lack of mass conservation and to reduce the effect of the pressure discretization error on the velocity error \cite{OR04,JJLR13,O02}.  For IPY, the use of grad-div stabilization was critical to remove the splitting error from the system \cite{RVX19}.

It is well known that the NSE \eqref{eq1}-\eqref{eq3} and FEM form of the NSE \eqref{eq4}-\eqref{eq5} are well-posed if the condition
\[
\alpha = M \nu^{-2} ||f||_{-1}<1.
\]
holds \cite{Laytonbook,temam}.  In fact, for any problem data, weak solutions to \eqref{eq1}-\eqref{eq3} exist, FEM solutions to \eqref{eq4}-\eqref{eq5} exist, and all such solutions satisfy
\begin{equation}
    \label{prelimeq3} ||\nabla u|| \leq \nu^{-1} ||f||_{-1}.
\end{equation}
The condition that $\alpha<1$ is a sufficient condition for uniqueness.  For sufficiently large $\alpha$, multiple solutions may exist \cite{Laytonbook}.

\begin{remark}
We do not make any assumptions on $\alpha$.  In the case of $\alpha$ large enough for multiple solutions to exist, we will prove convergence of CDA-IPY to the solution from which the partial solution data was taken.
\end{remark}

\subsection{CDA Preliminaries}

Denote by $\tau_H(\Omega)$ a coarse mesh of $\Omega$, and we require that nodes of $\tau_H$ are also nodes of $\tau_h$.
$I_H$ denotes an interpolant on $\Omega$ for which there exists a constant $C_1$ independent of $H$ such that 
\begin{align}
    \label{prelimeq4} ||I_Hv - v|| &\leq C_1H||\nabla v|| \quad \forall v \in X \\
    \label{prelimeq5} ||I_Hv|| &\leq C_1 ||v|| \quad \forall v \in X.
\end{align}
Examples of such $I_H$ are the $L^2$ projection onto $P_0(\tau_H)$ and the Scott-Zhang interpolant \cite{BS08}.  
The following lemma will be important for our analysis in the case of noise.
\begin{lem}\label{lem:geometric}
Suppose that the constants $r$ and $B$ satisfy $r > 1$ and $B \ge 0$. Then if the sequence of real numbers $\{a_m\}$ satisfies 
\begin{align}
r a_{m+1} \le a_m + B,
\end{align}
we have that
\begin{align*}
a_{m+1} \le a_0 \left(\frac{1}{r}\right)^{m+1} + \frac{B}{r-1}.
\end{align*}
\end{lem}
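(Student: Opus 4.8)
The plan is to prove this by straightforward induction on $m$, using the recursion $r a_{m+1} \le a_m + B$ directly. This is a standard Gr\"onwall-type discrete estimate, so I expect no genuine obstacle; the only care needed is in summing the geometric series cleanly and keeping the inequality directions straight.

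First I would unroll the recursion. Dividing by $r > 0$ gives $a_{m+1} \le \frac{1}{r} a_m + \frac{B}{r}$. Applying this repeatedly, one gets
\begin{align*}
a_{m+1} \le \frac{1}{r^{m+1}} a_0 + \frac{B}{r}\sum_{j=0}^{m} \frac{1}{r^{j}}.
\end{align*}
The base case $m=0$ is just the hypothesis rearranged, and the inductive step substitutes the bound for $a_m$ into $a_{m+1} \le \frac{1}{r} a_m + \frac{B}{r}$ and collects terms.

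Next I would bound the finite geometric sum. Since $r > 1$, we have $\sum_{j=0}^{m} r^{-j} \le \sum_{j=0}^{\infty} r^{-j} = \frac{1}{1 - 1/r} = \frac{r}{r-1}$, and this is where $B \ge 0$ is used, so that replacing the partial sum by the larger infinite sum preserves the inequality. Therefore $\frac{B}{r}\sum_{j=0}^{m} r^{-j} \le \frac{B}{r}\cdot\frac{r}{r-1} = \frac{B}{r-1}$, which yields exactly the claimed bound $a_{m+1} \le a_0 (1/r)^{m+1} + \frac{B}{r-1}$.

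There is essentially no hard part here; the statement is elementary. The one thing I would double-check is that nothing requires the $a_m$ to be nonnegative — they need not be, since the argument only ever uses the recursion and the positivity of $r$ and $B$. I would present the proof compactly, perhaps even in two or three lines, since its role in the paper is purely as a technical lemma for the noisy-data analysis in Section 3.
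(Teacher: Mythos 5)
Your proof is correct and complete: the unrolling of the recursion, the induction, and the bound of the partial geometric sum by $\frac{r}{r-1}$ (where $B\ge 0$ is indeed what keeps the inequality direction) all check out, and you are right that no sign condition on the $a_m$ is needed. The paper does not prove this lemma itself but defers to the reference \cite{LRZ19}; your argument is the standard one found there, so there is no substantive difference in approach.
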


\begin{proof}
See \cite{LRZ19}.
\end{proof}

\subsection{IPY preliminaries}

We now briefly review the (CDA-)IPY procedure.  IPY was first proposed in \cite{RVX19} for the steady NSE and then studied with Anderson acceleration in \cite{LRX24}.  The derivation of CDA-IPY begins with the grad-div stabilized CDA-Picard iteration. This iteration is then reformulated to solve for the pressure update $\delta_k^p = p_k - p_{k-1}$: Find $(u_k,\delta_k^p)\in(X_h,Q_h)$ satisfying for all $(v,q)\in (X_h,Q_h)$,
\begin{align*}
   \nu (\nabla u_k, \nabla v) + \gamma (\nabla \cdot u_k, \nabla \cdot v)+  b(u_{k-1}, u_k, v)  \qquad & \\
    - (\delta_k^p, \nabla \cdot v)  + \mu (I_H u_k,I_Hv) &= \langle f, v \rangle + (p_{k-1},\nabla \cdot v) + \mu(I_H u,I_Hv),\\
    (\nabla \cdot u_k, q) &= 0.
\end{align*}
The difference between IPY and CDA-IPY is the term $ \mu (I_H u_k,I_Hv)$.  Note that a variational crime is potentially being made on the test function for the CDA terms if $I_H$ is not self-adjoint in the $L^2$ inner product.  However, using $I_H v$ is important to the analysis. Also, when adding stability or penalty terms to a system, it is more important how they affect the system and solution and less important how they came to be in a variational formulation.

This FE system produces the linear system
\begin{align*}
\begin{pmatrix}
A_k & B\\
B^T & 0 
\end{pmatrix} \begin{pmatrix}
\hat u_k \\ \hat \delta^p_k
\end{pmatrix} = \begin{pmatrix}
F_k  \\0
\end{pmatrix},
\end{align*}
where $A_k =G + S + D + N_k $ are the contributions  (after boundary condition implementation) from the grad-div matrix $G$, the stiffness matrix $S$, the CDA matrix $D$, and the matrix representing the nonlinear term contributions $N_k$, and   the matrix $B$ comes from the pressure term. The vector $F_k$ represents the contributions of the right hand side terms.  

We apply Yosida splitting via an inexact LU of the coefficient matrix as
\begin{align}
\begin{pmatrix}
A_k & B\\
B^T & 0 
\end{pmatrix} \approx\begin{pmatrix}
A_k & 0 \\ B^T & -B^T\tilde A^{-1}B
\end{pmatrix}\begin{pmatrix}
I & A_k^{-1}B \\ 0& I
\end{pmatrix},
\end{align}
where $\tilde A= G + S + D$ is SPD and constant at all iterations.  Replacing the coefficient matrix with this approximation produces the following 3-step linear solve process:
\begin{align*}
A_k \hat z_k &= F_k,\\
\begin{pmatrix}
 G + S + D  & B^T \\ B & 0
\end{pmatrix} \begin{pmatrix}
\hat \chi_k \\ \hat \delta_k^p
\end{pmatrix} & = \begin{pmatrix}
 0 \\ -B\hat z_k
\end{pmatrix},\\
\mbox{set } \hat p_k = \hat  \delta_k^p + \hat p_{k-1} \mbox{ and solve }
A_k \hat u_k &= F_k - B\hat p_{k}.
\end{align*}
This 3-step solve process can be cast back into the following FE problems.

\begin{enumerate}
\item[1)] Find $z_k\in X_h$ such that for all $v\in X_h$,
\begin{multline}
\label{bhy1}
\nu(\nabla z_k,\nabla v) + \gamma (\nabla \cdot z_k,\nabla \cdot v) + b(u_{k-1}, z_k,v) + \mu(I_H z_k,I_Hv) \\ = \langle f,v \rangle  + (p_{k-1},\nabla \cdot v) + \mu(I_H u,I_Hv).
\end{multline}
\item[2) ] Find $(\chi_k,\delta_k^p)\in (X_h,Q_h)$ such that for all $(v,q)\in (X_h,Q_h)$,
\begin{align}
\label{bhy2}
\nu(\nabla \chi_k,\nabla v) + \gamma (\nabla \cdot \chi_k,\nabla \cdot v) + \mu(I_H \chi_k,I_Hv)- (\delta_k^p,\nabla \cdot v)  & =0,\\
\label{bhy3}
(\nabla \cdot \chi_k, q) & = -(\nabla \cdot z_k, q).
\end{align}
\item[3)] Set $p_k = \delta_k^p + p_{k-1}$ and then find $u_k\in X_h$ such that for all $v\in X_h$,
\begin{multline}
\label{bhy4}
\nu(\nabla u_k,\nabla v) + \gamma (\nabla \cdot u_k,\nabla \cdot v) + \mu(I_H u_k,I_Hv)+ b(u_{k-1}, u_k, v)    = \\
\langle f,v \rangle  + \mu(I_H u,I_Hv)+(p_k,\nabla \cdot v).
\end{multline}
\end{enumerate}

Analysis of CDA-IPY is naturally more convenient in the FE form 3-step problems, while implementation is best understood with the 3-step linear algebraic system.  At each iteration, we must perform two solves with $A_k$ as the coefficient matrix, and we must perform one Stokes-type solve.  There are many effective solvers for Stokes-type problems \cite{elman:silvester:wathen}, and in our tests we solve it by solving the SPD Schur complement with CG outer and inner solvers and preconditioning the outer CG solve with the pressure mass matrix.

\section{Analysis of CDA-IPY}

We have provided sufficient background and notation to now formally define the CDA-IPY algorithm.  

\begin{algorithm}[CDA-IPY with no noise] \label{ipycda} Given $I_H(u)$, $f\in H^{-1}(\Omega)$, $\nu>0$, $\mu\ge 0$, and $u_0 \in X_h$ and $p_0 \in Q_h$,  for $k=1,2,3,...$: \\

\noindent \textit{Step k:} This consists of the following 3 steps:\\

\textit{k.1:} Find $z_k \in X_h$ satisfying for all $v \in X_h$,
\begin{equation} \label{k1}
    \nu (\nabla z_k, \nabla v) + \gamma (\nabla \cdot z_k, \nabla \cdot v) + b(u_{k-1}, z_k, v) + \mu(I_H z_k, I_H v) = \langle f, v \rangle + \mu(I_H u, I_H v) + (p_{k-1}, \nabla \cdot v).
\end{equation}
\indent \textit{k.2:} Find $(w_k, \delta_k^p) \in (X_h, Q_h)$ satisfying for all $(v, q) \in (X_h, Q_h)$,
\begin{align}
    \nu (\nabla w_k, \nabla v) + \gamma (\nabla \cdot w_k, \nabla \cdot v) -(\delta_k^p, \nabla \cdot v) + \mu(I_H w_k, I_H v) &= 0, \label{k2a} \\
    (\nabla \cdot w_k, q) &= -(\nabla \cdot z_k, q). \label{k2b}
\end{align}
\indent \textit{k.3:} Set $p_k := p_{k-1} + \delta_k^p$ and then find $u_k \in X_h$ satisfying for all $v \in X_h$,
\begin{equation}\label{k3}
    \nu (\nabla u_k, \nabla v) + \gamma (\nabla \cdot u_k, \nabla \cdot v) + b(u_{k-1}, u_k, v) + \mu(I_H u_k, I_H v) = \langle f, v \rangle + \mu(I_H u, I_H v) + (p_{k}, \nabla \cdot v).
\end{equation}
\end{algorithm}

\subsection{Convergence of CDA-IPY}
We now consider the convergence of Algorithm \ref{ipycda}, that is, CDA-IPY where there is no noise in the known partial solution data.  Solvability of each iteration is clear since Steps k.1 and k.3 follow immediately from Lax-Milgram, and Step k.2 follows from well known Stokes theory \cite{Laytonbook,GR79}.
Although we were unable to determine an a priori stability bound, such a bound is not necessary for our convergence results below, and thus solvability together with convergence implies stability.  For notational simplicity, we define a constant that arises in the analyses below by
\[
R := 204\sqrt{2}\nu^{-1}C_1\beta^{-2} + 24\sqrt{2} \nu^{-\frac{1}{2}}C_1\beta^{-2} + 204\sqrt{2}C_1\beta^{-2},
\]
where $\beta$ is the inf-sup constant and $C_1$ comes from the interpolation bound \eqref{prelimeq4}.  

\begin{theorem}[Convergence of CDA-IPY] \label{conv1}
 Let $(u,p)$ be a steady NSE solution, $I_H(u)$ be known and $(u_{k-1}, p_{k-1}) \in (X_h, Q_h)$ be given.  Assume that $u_{k-1}$ is sufficiently close to $u$ so that $\nu ||\nabla (u_{k-1} - u)||^2 \leq ||\nabla u||^2$, and assume that the problem parameters satisfy 
 \[
 H \le \min \left\{ \ \frac{1}{\alpha^2 R}, \frac{101}{204 \sqrt 2 C_1\alpha^2} \right\},
 \]
 $\max \{ \nu\alpha^2, \frac{\nu}{2C_1^2H^2} \} \le\mu$, and $\gamma \ge \max \{ 2\mu^2C_1^6C_P^2H^2\nu^{-1}, \nu \}$.
  
Then the step $k$ solution $(u_k, p_k)$ of Algorithm $3.1$  satisfies
\begin{equation*}
\begin{split}
     \frac{\nu}{4} ||\nabla \left(u - u_k\right)||^2 + \lambda||u - u_k||^2 + \frac{\gamma^{-1}}{2}||p - p_k||^2 \leq \alpha^2 HR \left( \frac{\nu}{4} ||\nabla \left(u - u_{k-1}\right)||^2 + \lambda|| u - u_{k-1}||^2 \right. \\ 
    \left. + \frac{\gamma^{-1}}{2}||p - p_{k-1}||^2 \right).
\end{split}
\end{equation*}
\end{theorem}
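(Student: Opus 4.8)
The plan is to subtract each of the three sub-steps \eqref{k1}, \eqref{k2a}--\eqref{k2b}, \eqref{k3} from the weak steady NSE \eqref{eq4}--\eqref{eq5}, after first adding the identical terms $\gamma(\nabla\cdot u,\nabla\cdot v)$ and $\mu(I_H u,I_H v)$ to both sides of \eqref{eq4} (legitimate since $(\nabla\cdot u,q)=0$ for $q\in Q_h$), and then to run energy and inf--sup arguments on the resulting error equations. Write $e_k^z:=u-z_k$, $e_k^u:=u-u_k$, $e_k^p:=p-p_k$, where $(u,p)$ solves \eqref{eq4}--\eqref{eq5} (so $u\in V_h$). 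The nonlinearity is everywhere linearized by $b(u,u,v)-b(u_{k-1},y,v)=b(e_{k-1}^u,u,v)+b(u_{k-1},u-y,v)$; in each energy estimate the piece $b(u_{k-1},\cdot,\cdot)$ with a repeated last argument vanishes by skew-symmetry, while $b(e_{k-1}^u,u,\cdot)$ is bounded with the sharp trilinear estimate \eqref{prelimeq1}, the bound $||\nabla u||\le\nu^{-1}||f||_{-1}$, and $\alpha=M\nu^{-2}||f||_{-1}$ --- this is where $\alpha^2$ enters --- and then $||e_{k-1}^u||$ is replaced by $||I_H e_{k-1}^u||+C_1H||\nabla e_{k-1}^u||$ via \eqref{prelimeq4}, which is where the factor $H$ enters.

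First I would analyze sub-step k.1. Testing its error equation with $v=e_k^z$ removes the convective term; the pressure term $(e_{k-1}^p,\nabla\cdot e_k^z)$ is absorbed by Young's inequality into the grad-div contribution $\gamma||\nabla\cdot e_k^z||^2$ at the cost of $\tfrac{1}{2\gamma}||e_{k-1}^p||^2$ (this is exactly the IPY mechanism by which grad-div soaks up the splitting error); and $b(e_{k-1}^u,u,e_k^z)$ is handled as above. The identity $||e_k^z||^2\le 2||I_H e_k^z||^2+2C_1^2H^2||\nabla e_k^z||^2$ together with $\mu\ge\nu/(2C_1^2H^2)$ then turns the controlled quantities $\mu||I_H e_k^z||^2$ and $\nu||\nabla e_k^z||^2$ into control of $||e_k^z||^2$, so the output of this step is a bound of the shape
\[
\nu||\nabla e_k^z||^2+\gamma||\nabla\cdot e_k^z||^2+\mu||e_k^z||^2\ \lesssim\ \alpha^2H\big(\tfrac{\nu}{4}||\nabla e_{k-1}^u||^2+\lambda||e_{k-1}^u||^2\big)+\gamma^{-1}||e_{k-1}^p||^2 .
\]

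Next I would handle sub-steps k.2 and k.3 together. The structural fact, read off the 3-step linear algebra, is that $\tilde u_k:=z_k+w_k\in V_h$ is discretely divergence-free (by \eqref{k2b}) and the actual iterate satisfies $u_k=\tilde u_k-s_k$, where the splitting error $s_k\in X_h$ solves
\[
\nu(\nabla s_k,\nabla v)+\gamma(\nabla\cdot s_k,\nabla\cdot v)+\mu(I_H s_k,I_H v)=b(u_{k-1},u_k-z_k,v)\qquad\forall v\in X_h ,
\]
so that $||\nabla s_k||\le\nu^{-1}M||\nabla u_{k-1}||\,||\nabla(u_k-z_k)||$ and, crucially, $||\nabla\cdot s_k||\le\gamma^{-1/2}\nu^{-1/2}M||\nabla u_{k-1}||\,||\nabla(u_k-z_k)||$: the splitting pollutes the divergence only at order $\gamma^{-1/2}$. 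Adding \eqref{k1} and \eqref{k2a} and subtracting from the augmented NSE gives the error equation for $\tilde e_k:=u-\tilde u_k\in V_h$; testing with $\tilde e_k$ kills the pressure term and leaves, besides $b(e_{k-1}^u,u,\tilde e_k)$, a term $b(u_{k-1},w_k,\tilde e_k)$, for which one needs a Stokes-stability bound on $||\nabla w_k||$ in terms of $||\nabla e_k^z||$ (the data in \eqref{k2b} equals $-(\nabla\cdot z_k,q)=(\nabla\cdot e_k^z,q)$ because $u\in V_h$, hence is of size $O(||\nabla e_k^z||)$), which Step 1 supplies. Separately, $||e_k^p||$ is bounded via the LBB condition \eqref{prelimeq6} applied to the k.3 error equation, expressing it through $||\nabla e_k^u||$, $||\nabla\cdot e_k^u||$, $||e_k^u||$ and $||\nabla e_{k-1}^u||$; since $\nabla\cdot e_k^u=\nabla\cdot\tilde e_k+\nabla\cdot s_k$ with $\tilde e_k\in V_h$ and $||\nabla\cdot s_k||=O(\gamma^{-1/2})$, the weight $\gamma^{-1}$ in front of $||e_k^p||^2$ is what keeps this term in check.

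The main obstacle, I expect, is assembling these ingredients into the composite functional $\tfrac{\nu}{4}||\nabla e_k^u||^2+\lambda||e_k^u||^2+\tfrac{\gamma^{-1}}{2}||e_k^p||^2$ with a coherent choice of constants. Splitting $e_k^u=\tilde e_k+s_k$ and using $||\nabla(u_k-z_k)||^2\le 2||\nabla e_k^u||^2+2||\nabla e_k^z||^2$ together with $M||\nabla u_{k-1}||\le(1+\nu^{-1/2})\nu\alpha$ (from the closeness hypothesis $\nu||\nabla e_{k-1}^u||^2\le||\nabla u||^2$), the splitting-error contributions $||\nabla s_k||^2$, $||s_k||^2$, $||\nabla\cdot s_k||^2$ become small multiples of $||\nabla e_k^u||^2+||\nabla e_k^z||^2$; the $||\nabla e_k^u||^2$ part must be kicked back to the left, which is possible precisely because of the $\gamma^{-1/2}$ smallness and the hypotheses $\gamma\ge\max\{2\mu^2C_1^6C_P^2H^2\nu^{-1},\nu\}$. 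Unlike CDA-Picard, the IPY splitting injects both the divergence error and the pressure error into the estimate with $\gamma$- and $\mu$-dependent constants, and $w_k$ is only controlled by $||\nabla e_k^z||$ with a constant depending on $\mu$ and $\gamma$; the technical crux is to choose $\lambda$ and fix every Young parameter so that (i) every step-$k$ quantity is absorbed on the left, (ii) the $||e_{k-1}^u||$-leftovers land inside $\lambda||e_{k-1}^u||^2$ (here $\mu\ge\nu\alpha^2$ and $\mu\ge\nu/(2C_1^2H^2)$ are used together, which also forces $\mu$ to be taken $O(1)$ rather than large), and (iii) the $||e_{k-1}^p||^2$-leftovers receive enough $\gamma^{-1}$ damping to be dominated by $\alpha^2HR\cdot\tfrac{\gamma^{-1}}{2}||e_{k-1}^p||^2$; tracking the accumulated constants and invoking $H\le\min\{1/(\alpha^2R),\,101/(204\sqrt2\,C_1\alpha^2)\}$ then collapses everything into the single factor $\alpha^2HR$ and yields the claimed contraction.
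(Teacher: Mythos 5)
Your treatment of Step k.1 is exactly the paper's: test the error equation with $e_k^z$, absorb the pressure with the grad-div term, and use $\mu\ge\nu/(2C_1^2H^2)$ together with the interpolation bound to manufacture $\lambda\|e_k^z\|^2$ on the left (this is \eqref{eq14a}); likewise your energy estimate for $\tilde e_k=u-(z_k+w_k)$ coincides with the paper's \eqref{eq17}--\eqref{eq19b}. The divergence --- and the gap --- is in how you close the loop on $\|p-p_k\|$ and Step k.3. The paper extracts $\|p-p_k\|$ from the combined k.1+k.2 error equation \eqref{eq16} via the inf-sup condition, so the resulting bound \eqref{eq21} involves only $w_k+z_k-u$ (already controlled by \eqref{eq19b}), $e_k^z$, and $e_{k-1}^u$; crucially, every nonlinear term there carries an $L^2$ factor $\|e_k^z\|^{1/2}$ or $\|e_{k-1}^u\|^{1/2}$, and it is the conversion $\|e\|\le\lambda^{-1/2}(\lambda\|e\|^2)^{1/2}$ with $\lambda^{-1/2}=\sqrt 2\,C_1H\nu^{-1/2}$ that harvests the factor $H$. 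Your route instead reads $\|p-p_k\|$ off the k.3 error equation and controls $\nabla\cdot e_k^u$ through the splitting error $s_k=\tilde u_k-u_k$, whose defining data is $b(u_{k-1},u_k-z_k,\cdot)$. That term admits no $L^2$/$H$ gain: $u_k-z_k$ is a difference of two step-$k$ iterates, so the only available bound is $M\|\nabla u_{k-1}\|\,\|\nabla(u_k-z_k)\|\,\|\nabla s_k\|$, and after squaring, the weight $\gamma$ coming from the pressure estimate exactly cancels the $\gamma^{-1/2}$ smallness of $\|\nabla\cdot s_k\|$, leaving a contribution of size $\nu\alpha^2(1+\nu^{-1/2})^2\beta^{-2}\bigl(\|\nabla e_k^u\|^2+\|\nabla e_k^z\|^2\bigr)$ up to absolute constants. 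Absorbing the $\|\nabla e_k^u\|^2$ piece into $\tfrac{\nu}{4}\|\nabla e_k^u\|^2$ forces $\alpha^2\lesssim\beta^2$, and the $\|\nabla e_k^z\|^2$ piece converts via \eqref{eq14a} into $\alpha^2\beta^{-2}$ times the step-$(k-1)$ functional with no factor of $H$, so it cannot be dominated by $\alpha^2HR$. Either way you are smuggling in a uniqueness-type smallness condition on $\alpha$ that the theorem explicitly does not assume: the entire point of the result is that large $\alpha$ is compensated only by small $H$.

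The same defect appears in your proposed handling of $b(u_{k-1},w_k,\tilde e_k)$: a Stokes-stability bound $\|\nabla w_k\|\lesssim\|\nabla e_k^z\|$ gives, after Young, a term of order $\nu\alpha^2\|\nabla e_k^z\|^2$ with no $L^2$ factor and hence no $H$. The paper avoids this entirely by grouping the nonlinearity as $b(e_{k-1}^u,u,v)+b(u,e_k^z,v)-b(e_{k-1}^u,e_k^z,v)$, which keeps $e_k^z$ in a slot where its $L^2$ norm appears. Your structural observations (that $\tilde u_k=z_k+w_k$ is discretely divergence-free, that $s_k$ solves the stated auxiliary problem, and that $\|\nabla\cdot s_k\|=O(\gamma^{-1/2})$) are all correct, but to obtain the stated contraction rate you must drop the $s_k$ decomposition and the k.3-based pressure bound and instead pair the k.3 energy estimate \eqref{eq14b} with the pressure estimate derived from the k.1+k.2 equation, as the paper does.
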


\begin{remark}
Theorem \ref{conv1} proves that if $H$ is sufficiently small (and other IPY parameters are appropriately chosen and $\| \nabla (u - u_0) \| \le \nu^{-1/2} \|\nabla u \|$), then CDA-IPY is a contractive algorithm.  Hence, if the initial velocity is chosen so that $\| \nabla (u - u_0) \| \le \nu^{-1/2} \|\nabla u \|$, then CDA-IPY converges at least linearly with rate $\alpha H^{1/2} R^{1/2}$.  The same scaling of the linear converge rate by $H^{1/2}$ was also observed in the CDA-Picard studies \cite{LHRV23,H25,GLNR24} and agrees with intuition that as more data is incorporated into the solver, the faster the convergence will be.
\end{remark}

\begin{remark}
The typical steady NSE uniqueness assumption $\alpha<1$ is not assumed in the proof, but instead we assume only that $H\alpha^2 < \frac{101}{204 \sqrt 2 C_1}$.  Thus, the CDA-IPY algorithm will work for large data and even in the case of non-unique NSE solutions: provided that $H$ is sufficiently small so that $H\alpha^2 < \frac{101}{204 \sqrt 2 C_1}$, CDA-IPY will converge to the steady NSE solution from which the partial solution data $I_H(u)$ was taken.
\end{remark}

\begin{remark}
The restriction that $\mu>\frac{\nu}{2C_1^2H^2}$ is common in CDA analyses \cite{AOT14,LRZ19}, but it is not believed to be sharp and is also not found to be a sharp bound in our tests.    From the assumption on $\gamma$ in the theorem, the grad-div parameter $\gamma$ appears to need to grow with $\mu$.  This suggests that one should not take $\mu$ too large, since having large $\gamma$ makes linear solves with $A_k$ and $\tilde A$ more difficult.  In our tests, we chose $\gamma=1$ together with both $\mu=1$ and $\mu=1000$. The method behaved very well and was not significantly affected by the larger choice of $\mu$.
\end{remark}

\begin{proof}
Let $z_k, w_k,\ \delta_k^p,\ u_k,\ p_k$ be step $k$ solutions from Algorithm \ref{ipycda}, and denote $e_k^z := u - z_k$ and $e_k^u := u - u_k$. We begin by subtracting \eqref{k1} from \eqref{eq4} to get the error equation for step $k.1$ to be
\begin{equation}
    \label{eq9} \nu (\nabla e_k^z, \nabla v) + \gamma (\nabla \cdot e_k^z, \nabla \cdot v) + b(e_{k-1}^u, u, v) + b(u_{k-1}, e_k^z, v) + \mu(I_H e_k^z, I_H v) = (p - p_{k-1}, \nabla \cdot v).
\end{equation}
Choosing $v = e_k^z$ vanishes the second nonlinear term and leaves
\begin{equation}
    \label{eq10} \nu ||\nabla e_k^z||^2 + \gamma ||\nabla \cdot e_k^z||^2 + \mu ||I_H e_k^z||^2 = (p - p_{k-1}, \nabla \cdot e_k^z) - b(e_{k-1}^u, u, e_k^z).
\end{equation}
Using Cauchy-Schwarz, \eqref{prelimeq1}, and \eqref{prelimeq3}, we can upper bound the right hand side terms to get 
\begin{equation}
    \label{eq11} \nu ||\nabla e_k^z||^2 + \gamma ||\nabla \cdot e_k^z||^2 + \mu ||I_H e_k^z||^2 \leq ||p - p_{k-1}|| ||\nabla \cdot e_k^z|| + \nu \alpha ||e_{k-1}^u||^{\frac{1}{2}} ||\nabla e_{k-1}^u||^{\frac{1}{2}}||\nabla e_k^z||.
\end{equation}
Using Young's inequality, we can further bound the right hand side terms and get
\begin{equation}
    \label{eq12} \nu ||\nabla e_k^z||^2 + \gamma ||\nabla \cdot e_k^z||^2 + \mu ||I_H e_k^z||^2 \leq \frac{\gamma^{-1}}{2}||p - p_{k-1}||^2 + \frac{\gamma}{2} ||\nabla \cdot e_k^z||^2 + \frac{\nu}{4}||\nabla e_k^z||^2 + \nu \alpha^2 ||e_{k-1}^u|| ||\nabla e_{k-1}^u||.
\end{equation}

We can also lower bound the left hand side terms of \eqref{eq12}, following ideas from \cite{GN20} by rewriting the left hand side and utilizing \eqref{prelimeq4} as well as the triangle inequality to obtain

\begin{align}
    \gamma ||\nabla \cdot e_k^z||^2 + \nu ||\nabla e_k^z||^2  + \mu ||I_H e_k^z||^2 &= \gamma ||\nabla \cdot e_k^z||^2 + \frac{\nu}{2} ||\nabla e_k^z||^2  + \mu ||I_H e_k^z||^2 + \frac{\nu}{2} ||\nabla e_k^z||^2 \nonumber \\
    &\geq \gamma ||\nabla \cdot e_k^z||^2 + \frac{\nu}{2C_1^2H^2} || e_k^z - I_H e_k^z||^2  + \mu ||I_H e_k^z||^2 + \frac{\nu}{2} ||\nabla e_k^z||^2 \nonumber \\
    &\geq \gamma ||\nabla \cdot e_k^z||^2 + \lambda\left( || e_k^z - I_H e_k^z||^2  + ||I_H e_k^z||^2\right) + \frac{\nu}{2} ||\nabla e_k^z||^2 \nonumber \\
   \label{eq13} & \geq \gamma ||\nabla \cdot e_k^z||^2 + \lambda|| e_k^z||^2 + \frac{\nu}{2} ||\nabla e_k^z||^2,
\end{align}
where $\lambda = \min \{\mu, \frac{\nu}{2C_1^2H^2} \} = \frac{\nu}{2C_1^2H^2}$ by the assumption on $\mu$. Combining \eqref{eq12} with \eqref{eq13} now gives 
\begin{equation}
    \label{eq14a} \frac{\gamma}{2} ||\nabla \cdot e_k^z||^2  + \frac{\nu}{4} ||\nabla e_k^z||^2 + \lambda|| e_k^z||^2 \leq \frac{\gamma^{-1}}{2}||p - p_{k-1}||^2 + \nu \alpha^2 ||e_{k-1}^u|| ||\nabla e_{k-1}^u||.
\end{equation}
Similarly, for Step $k.3$ of CDA-IPY \eqref{k3}, we obtain
\begin{equation}
    \label{eq14b} \frac{\gamma}{2} ||\nabla \cdot e_k^u||^2  + \frac{\nu}{4} ||\nabla e_k^u||^2 + \lambda|| e_k^u||^2 \leq \frac{\gamma^{-1}}{2}||p - p_{k}||^2 + \nu \alpha^2 ||e_{k-1}^u|| ||\nabla e_{k-1}^u||.
\end{equation}

Next, we work to obtain a bound on the term $||p - p_k||$ in \eqref{eq14b}, and this takes several steps.  We start by adding Steps $k.1$ \eqref{k1} and $k.2$ \eqref{k2a} to get 
\begin{multline*}
    \gamma (\nabla \cdot (w_k + z_k), \nabla \cdot v) + \nu (\nabla (w_k + z_k), \nabla v) \\ + \mu (I_H(w_k + z_k), I_Hv) = \langle f, v \rangle + \mu (I_Hu, I_Hv) + (p_k, \nabla \cdot v) - b(u_{k-1}, z_k, v).
\end{multline*}
Subtracting \eqref{eq4} from this, we obtain
\begin{align}
\begin{split}
     \label{eq16} \gamma (\nabla \cdot (w_k + z_k - u), \nabla \cdot v) + &\nu (\nabla (w_k + z_k - u), \nabla v) + \mu (I_H(w_k + z_k - u), I_Hv) \\
     &= (p_k - p, \nabla \cdot v) - b(e_{k-1}^u, e_k^z, v) - b(u, e_k^z, v) - b(e_{k-1}^u, u, v),
\end{split}
\end{align}
and then choosing $v = w_k + z_k - u$ provides us with
\begin{multline}
     \label{eq17} \gamma ||\nabla \cdot (w_k + z_k - u)||^2 + \nu ||\nabla (w_k + z_k - u)||^2 + \mu ||I_H(w_k + z_k - u)||^2\\
     = - b(e_{k-1}^u, e_k^z, w_k + z_k - u) - b(u, e_k^z,w_k + z_k - u) - b(e_{k-1}^u, u, w_k + z_k - u).
\end{multline}
Using \eqref{prelimeq1} on each of our right hand side terms along with Young's inequality and the assumption that $\nu ||\nabla e_{k-1}^u||^2 \leq ||\nabla u||^2$, we get that
\begin{multline}
     \label{eq18} \gamma ||\nabla \cdot (w_k + z_k - u)||^2 + \nu ||\nabla (w_k + z_k - u)||^2 + \mu ||I_H(w_k + z_k - u)||^2\\
     \leq (2\alpha^2 + 2\alpha^2\nu)||e_k^z||||\nabla e_k^z||+ 2\alpha^2\nu||e_{k-1}^u||||\nabla e_{k-1}^u|| + \frac{3\nu}{8}||\nabla (w_k + z_k - u)||^2.
\end{multline}
Moreover, utilizing the same techniques to obtain \eqref{eq13}, we can lower bound \eqref{eq18}, and thus we have
\begin{multline}
     \label{eq19b} \gamma ||\nabla \cdot (w_k + z_k - u)||^2 + \nu ||\nabla (w_k + z_k - u)||^2 + \lambda ||w_k + z_k - u||^2 \\
     \leq (16\alpha^2 + 16\alpha^2\nu)||e_k^z||||\nabla e_k^z||+ 16\alpha^2\nu||e_{k-1}^u||||\nabla e_{k-1}^u||.
\end{multline}

Isolating the pressure error term in \eqref{eq16} and applying Cauchy-Schwarz, \eqref{prelimeq1}, \eqref{prelimeq5}, and \eqref{prelimeq7} followed by the inf-sup condition bound \eqref{prelimeq6}, we get
\begin{multline}
    \label{eq21} \beta ||p - p_k|| \leq \frac{(p-p_k, \nabla \cdot v)}{||\nabla v||} \leq \gamma ||\nabla \cdot (w_k + z_k - u)|| + \nu || \nabla (w_k + z_k - u)|| + \mu C_1^2C_P||w_k + z_k - u|| \\ + M ||\nabla e_{k-1}^u|| ||e_k^z||^{\frac{1}{2}} ||\nabla e_k^z||^{\frac{1}{2}} + M ||\nabla u|| ||e_k^z||^{\frac{1}{2}} ||\nabla e_k^z||^{\frac{1}{2}} + M ||e_{k-1}^u||^{\frac{1}{2}} ||\nabla e_{k-1}^u||^{\frac{1}{2}} ||\nabla u||.
\end{multline}
Using the assumption that $\nu ||\nabla e_{k-1}^u||^2 \leq ||\nabla u||^2$ and \eqref{prelimeq3}, the bound \eqref{eq21} reduces to
\begin{multline}
   \label{eq23} \beta ||p - p_k|| \leq \gamma ||\nabla \cdot (w_k + z_k - u)|| + \nu || \nabla (w_k + z_k - u)|| + \mu C_1^2C_P||w_k + z_k - u||\\ + (\alpha \nu^{\frac{1}{2}} + \alpha \nu) ||e_k^z||^{\frac{1}{2}} ||\nabla e_k^z||^{\frac{1}{2}} + \alpha \nu||e_{k-1}^u||^{\frac{1}{2}} ||\nabla e_{k-1}^u||^{\frac{1}{2}}.
\end{multline}
Squaring both sides of \eqref{eq23} and applying Young's inequality gives us
\begin{multline*}
   \beta^2 ||p - p_k||^2 \leq 6 \left[\gamma^2 ||\nabla \cdot (w_k + z_k - u)||^2 + \nu^2 || \nabla (w_k + z_k - u)||^2 + \mu^2 C_1^4C_P^2||w_k + z_k - u||^2 \right. \\ \left. + (\alpha \nu^{\frac{1}{2}} + \alpha \nu)^2 ||e_k^z|| ||\nabla e_k^z|| + \alpha^2 \nu ^2||e_{k-1}^u||||\nabla e_{k-1}^u|| \right],
\end{multline*}
and using the assumption that $\gamma \geq \nu$, we obtain
\begin{multline}
   \label{eq25} \beta^2 ||p - p_k||^2 \leq 6\gamma \left[ \gamma ||\nabla \cdot (w_k + z_k - u)||^2 + \nu || \nabla (w_k + z_k - u)||^2 + \gamma^{-1} \mu^2 C_1^4C_P^2||w_k + z_k - u||^2\right.\\ \left. + (\alpha^2 + 2\alpha^2 \nu^{\frac{1}{2}} + \alpha^2 \nu) ||e_k^z|| ||\nabla e_k^z|| + \alpha^2 \nu ||e_{k-1}^u||||\nabla e_{k-1}^u|| \right].
\end{multline}
By our assumption that $\gamma \ge 2\mu^2C_1^6C_P^2H^2\nu^{-1}$, we can use the bound \eqref{eq19b} in \eqref{eq25} to find that
\begin{multline}
   \label{eq26} \beta^2 ||p - p_k||^2 \leq 6\gamma \left[(16\alpha^2+ 16\alpha^2\nu)||e_k^z||||\nabla e_k^z||+ 16\alpha^2\nu||e_{k-1}^u||||\nabla e_{k-1}^u|| \right. \\ \left. + (\alpha^2 + 2\alpha^2 \nu^{\frac{1}{2}} + \alpha^2 \nu) ||e_k^z|| ||\nabla e_k^z|| + \alpha^2 \nu ||e_{k-1}^u||||\nabla e_{k-1}^u|| \right].
\end{multline}
Combining like terms and dividing by $\beta^2$, we get the bound
\begin{equation}
   \label{eq27}||p - p_k||^2 \leq 6\beta^{-2}\gamma \left[(17\alpha^2 + 2\alpha^2 \nu^{\frac{1}{2}} + 17\alpha^2 \nu) ||e_k^z|| ||\nabla e_k^z|| + 17\alpha^2 \nu ||e_{k-1}^u||||\nabla e_{k-1}^u|| \right].
\end{equation}
Using \eqref{eq14a} to bound the $e_k^z$ terms, we now have
\begin{multline}
    \label{eq28} ||p - p_k||^2 \leq 6\beta^{-2}\gamma \left[\left(17\alpha^2 + 2\alpha^2 \nu^{\frac{1}{2}} + 17\alpha^2 \nu\right) \left(2\nu^{-\frac{1}{2}} \lambda^{-\frac{1}{2}} \right) \left(\frac{\gamma^{-1}}{2}||p - p_{k-1}||^2 + \nu \alpha^2 ||e_{k-1}^u|| ||\nabla e_{k-1}^u|| \right) \right. \\ \left. + 17\alpha^2 \nu ||e_{k-1}^u||||\nabla e_{k-1}^u|| \right].
\end{multline}
Multiplying both sides of \eqref{eq28} by $\frac{\gamma^{-1}}{2}$ and combining like terms gives 
\begin{multline}
    \label{eq29} \frac{\gamma^{-1}}{2}||p - p_k||^2 \leq 3\beta^{-2} \left[\left(34\alpha^2\nu^{-\frac{1}{2}}\lambda^{-\frac{1}{2}} + 4\alpha^2 \lambda^{-\frac{1}{2}} + 34\alpha^2 \nu^{\frac{1}{2}}\lambda^{-\frac{1}{2}}\right) \left(\frac{\gamma^{-1}}{2}||p - p_{k-1}||^2 \right) \right. \\ \left. + \left(34\alpha^4\nu^{\frac{1}{2}}\lambda^{-\frac{1}{2}} + 4\alpha^4\nu \lambda^{-\frac{1}{2}} + 34\alpha^4 \nu^{\frac{3}{2}}\lambda^{-\frac{1}{2}}+ 17\alpha^2\nu \right) ||e_{k-1}^u|| ||\nabla e_{k-1}^u|| \right].
\end{multline}
Now using \eqref{eq29} in \eqref{eq14b}, we obtain
\begin{multline}
    \label{eq30} \frac{\gamma}{2} ||\nabla \cdot e_k^u||^2  + \frac{\nu}{4} ||\nabla e_k^u||^2 + \lambda|| e_k^u||^2 \\ \leq 3\beta^{-2} \left[\left(34\alpha^2\nu^{-\frac{1}{2}}\lambda^{-\frac{1}{2}} + 4\alpha^2 \lambda^{-\frac{1}{2}} + 34\alpha^2 \nu^{\frac{1}{2}}\lambda^{-\frac{1}{2}}\right) \left(\frac{\gamma^{-1}}{2}||p - p_{k-1}||^2 \right) \right. \\ \left. + \left(34\alpha^4\nu^{\frac{1}{2}}\lambda^{-\frac{1}{2}} + 4\alpha^4\nu \lambda^{-\frac{1}{2}} + 34\alpha^4 \nu^{\frac{3}{2}}\lambda^{-\frac{1}{2}}+ 17\alpha^2\nu \right) ||e_{k-1}^u|| ||\nabla e_{k-1}^u|| \right] \\ + \nu \alpha^2 ||e_{k-1}^u|| ||\nabla e_{k-1}^u||.
\end{multline}
Adding this bound to \eqref{eq29} and reducing now gives
\begin{multline}
    \label{eq32} \frac{\gamma}{2} ||\nabla \cdot e_k^u||^2  + \frac{\nu}{4} ||\nabla e_k^u||^2 + \lambda|| e_k^u||^2 + \frac{\gamma^{-1}}{2}||p - p_k||^2 \\ \leq \left(204\alpha^2\nu^{-\frac{1}{2}}\lambda^{-\frac{1}{2}}\beta^{-2} + 24\alpha^2 \lambda^{-\frac{1}{2}}\beta^{-2} + 204\alpha^2 \nu^{\frac{1}{2}}\lambda^{-\frac{1}{2}}\beta^{-2}\right) \left(\frac{\gamma^{-1}}{2}||p - p_{k-1}||^2 \right) \\ + \left(204\alpha^4\nu^{\frac{1}{2}}\lambda^{-\frac{1}{2}}\beta^{-2} + 24\alpha^4\nu \lambda^{-\frac{1}{2}}\beta^{-2} + 204\alpha^4 \nu^{\frac{3}{2}}\lambda^{-\frac{1}{2}}\beta^{-2}+ 102\alpha^2\nu \beta^{-2} + \alpha^2\nu \right) ||e_{k-1}^u|| ||\nabla e_{k-1}^u||.
\end{multline}
Using the definition of $\lambda$ and since $\frac{\gamma}{2} ||\nabla \cdot e_k^u||^2 \ge 0$, we rewrite \eqref{eq32} as
\begin{multline}
    \label{eq33} \frac{\nu}{4} ||\nabla e_k^u||^2 + \lambda|| e_k^u||^2 + \frac{\gamma^{-1}}{2}||p - p_k||^2 \\ \leq \left(204\sqrt{2}\alpha^2\nu^{-1}C_1H\beta^{-2} + 24\sqrt{2}\alpha^2 \nu^{-\frac{1}{2}}C_1H\beta^{-2} + 204\sqrt{2}\alpha^2C_1H\beta^{-2}\right) \left(\frac{\gamma^{-1}}{2}||p - p_{k-1}||^2 \right) \\ + \left(204\sqrt{2}\alpha^4C_1H\beta^{-2} + 24\sqrt{2}\alpha^4\nu^{\frac{1}{2}}C_1H\beta^{-2} + 204\sqrt{2}\alpha^4 \nu C_1H\beta^{-2}+ 102\alpha^2\nu \beta^{-2} + \alpha^2\nu \right) ||e_{k-1}^u|| ||\nabla e_{k-1}^u||.
\end{multline}
Now using the definition of $R$, \eqref{eq33} reduces to
\begin{multline}
    \label{eq34} \frac{\nu}{4} ||\nabla e_k^u||^2 + \lambda|| e_k^u||^2 + \frac{\gamma^{-1}}{2}||p - p_k||^2\leq \alpha^2HR \left[ \frac{\gamma^{-1}}{2}||p - p_{k-1}||^2 \right. \\ \left. + \nu\left(\alpha^2+ 102 \beta^{-2}R^{-1}H^{-1} + R^{-1}H^{-1} \right) ||e_{k-1}^u|| ||\nabla e_{k-1}^u|| \right].
\end{multline}
Using Young's inequality on the last term, we have
\begin{multline}
    \label{eq35} \frac{\nu}{4} ||\nabla e_k^u||^2 + \lambda|| e_k^u||^2 + \frac{\gamma^{-1}}{2}||p - p_k||^2\leq \alpha^2 HR \left[ \frac{\gamma^{-1}}{2}||p - p_{k-1}||^2 \right. \\ \left. + \frac{\nu}{4}  ||\nabla e_{k-1}^u||^2 + \nu\left(\alpha^2+ 102 \beta^{-2}R^{-1}H^{-1} + R^{-1}H^{-1} \right)^2  ||e_{k-1}^u||^2 \right].
\end{multline}
It remains to show that 
\begin{equation}\label{needbound}
\nu\left(\alpha^2+ 102 \beta^{-2}R^{-1}H^{-1} + R^{-1}H^{-1} \right)^2 \leq \lambda = \frac{\nu}{2C_1^2H^2}.
\end{equation}   Since $R > 204\sqrt{2}C_1\beta^{-2}$, the desired inequality \eqref{needbound} is inferred if the bound
\[
H\alpha^2 + \frac{102\beta^{-2} + 1}{204\sqrt{2} C_1 \beta^{-2}}  \le \frac{1}{\sqrt 2C_1}
\]
holds.  We will now show that it indeed holds.

Since $\beta$ is an inf-sup constant and we assume homogeneous Dirichlet boundary conditions, it holds that $\beta\le 1$, and thus
\[
H\alpha^2 + \frac{102\beta^{-2} + 1}{204\sqrt{2} C_1 \beta^{-2}} = H\alpha^2 + \frac{102 + \beta^2}{204\sqrt{2} C_1}\le H\alpha^2 + \left(\frac{103}{204} \right) \frac{1}{\sqrt 2 C_1}.
\]
Thus, as the theorem assumes $H\alpha^2 \leq \frac{101}{204 \sqrt 2 C_1}$, the inequality \eqref{needbound} holds.  Thus, we obtain the estimate
%
%
%
%
\begin{equation}
    \label{eq38} \frac{\nu}{4} ||\nabla e_k^u||^2 + \lambda|| e_k^u||^2 + \frac{\gamma^{-1}}{2}||p - p_k||^2\leq \alpha^2 HR \left[ \frac{\nu}{4} ||\nabla e_{k-1}^u||^2 + \lambda|| e_{k-1}^u||^2 + \frac{\gamma^{-1}}{2}||p - p_{k-1}||^2 \right],
\end{equation}
which completes the proof.
\end{proof}

\subsection{CDA-IPY in the case of noisy partial solution data}

We now consider the case of partial solution data with noise.  That is, instead of knowing $I_H(u)$, we instead know $I_H(u+\epsilon)$, which comes from knowing $\left( u(x_i)+\epsilon(x_i) \right)$ at certain locations $x_i\in \Omega$.  The CDA-IPY algorithm can then be modified as follows.

\begin{algorithm}\label{CDA-IPY with noisy partial solution data}\label{alg2}

Given $u_0 \in X_h$ and $p_0 \in Q_h$, Step $k$ of the algorithm consists of the following 3 steps:\\

\textit{k.1:} Find $z_k \in X_h$ satisfying for all $v \in X_h$,
\begin{equation}
    \nu (\nabla z_k, \nabla v) + \gamma (\nabla \cdot z_k, \nabla \cdot v) + b(u_{k-1}, z_k, v) + \mu(I_H z_k, I_H v) = \langle f, v \rangle+ \mu(I_H (u+\epsilon), I_H v) + (p_{k-1}, \nabla \cdot v). \label{noise1}
\end{equation}
\indent \textit{k.2:} Find $(w_k, \delta_k^p) \in (X_h, Q_h)$ satisfying for all $(v, q) \in (X_h, Q_h)$,
\begin{align*}
    \nu (\nabla w_k, \nabla v) + \gamma (\nabla \cdot w_k, \nabla \cdot v) -(\delta_k^p, \nabla \cdot v) + \mu(I_H w_k, I_H v) &= 0,\\
    (\nabla \cdot w_k, q) &= -(\nabla \cdot z_k, q).
\end{align*}
\indent \textit{k.3:} Set $p_k := p_{k-1} + \delta_k^p$ and then find $u_k \in X_h$ satisfying for all $v \in X_h$,
\begin{equation}
    \nu (\nabla u_k, \nabla v) + \gamma (\nabla \cdot u_k, \nabla \cdot v) + b(u_{k-1}, u_k, v) + \mu(I_H u_k, I_H v) = \langle f, v \rangle+ \mu(I_H (u+\epsilon), I_H v) + (p_{k}, \nabla \cdot v). \label{noise3}
\end{equation}

\end{algorithm}

\begin{theorem}[Convergence of CDA-IPY with noisy partial solution data]\label{conv2}
Under the same assumptions as Theorem \ref{conv1}, Algorithm \ref{alg2} solutions satisfy
\begin{multline}
    \frac{\nu}{4} ||\nabla e_k^u||^2 + \lambda|| e_k^u||^2 + \frac{\gamma^{-1}}{2}||p - p_k||^2
    \leq   \left[ \frac{\nu}{4} ||\nabla e_{0}^u||^2 + \lambda|| e_{0}^u||^2 + \frac{\gamma^{-1}}{2}||p - p_{0}||^2 \right] \left(\alpha^2 H R\right)^{k+1} \\
    +  (4\alpha^2 H R + 1) \mu  \| I_H \epsilon \|^2.
    \label{thm2b}
\end{multline}
For $k$ sufficiently large and $\alpha^2 H R<1$, the CDA-IPY velocity solutions also satisfy
\begin{equation}
\| e_k^u \| \le \left( \frac{6 C_1^2 H^2 \mu}{\nu} \right)^{1/2} \| I_H \epsilon \|. \label{thm2c}
\end{equation}
\end{theorem}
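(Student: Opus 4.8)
The plan is to follow the proof of Theorem~\ref{conv1} essentially verbatim, carrying along the one extra term that the noise introduces. Subtracting \eqref{noise1} from \eqref{eq4} differs from the derivation of the step-$k.1$ error equation \eqref{eq9} only in that the right-hand side acquires the additional term $-\mu(I_H\epsilon,I_Hv)$, and subtracting \eqref{noise3} from \eqref{eq4} likewise introduces $-\mu(I_H\epsilon,I_Hv)$ in the step-$k.3$ error equation; step $k.2$ is untouched since $\epsilon$ does not appear in it. Consequently, each estimate in the proof of Theorem~\ref{conv1} that used the CDA right-hand side---the test $v=e_k^z$ giving \eqref{eq14a}, the test $v=e_k^u$ giving \eqref{eq14b}, the sum of \eqref{k1} and \eqref{k2a} giving \eqref{eq16}, the choice $v=w_k+z_k-u$ giving \eqref{eq19b}, and the inf--sup pressure-recovery step giving \eqref{eq21}---picks up exactly one new term of the form $\mu(I_H\epsilon,I_H\cdot)$, and these are the only changes.

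The central task is to absorb each such term at the cost of an additive contribution, the same at every iteration, of a fixed multiple of $\mu\|I_H\epsilon\|^2$. In the energy-type estimates one writes $\mu(I_H\epsilon,I_H\phi)\le\tfrac{\mu}{2}\|I_H\epsilon\|^2+\tfrac{\mu}{2}\|I_H\phi\|^2$ and absorbs the $\|I_H\phi\|^2$ piece into the $\mu\|I_H(\cdot)\|^2$ term already present on the left-hand side; this is the delicate point, because that same term is exactly what the Poincar\'e-type lower bound used in \eqref{eq13} consumes to produce $\lambda\|\cdot\|^2$, so the splitting must be done against the $\mu\|I_H(\cdot)\|^2$ and grad-div terms on the left rather than against the diffusion term, exploiting the slack in the hypotheses on $\mu$ and $\gamma$ and the large constants hidden in $R$. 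In the pressure-recovery step the new term is bounded, via \eqref{prelimeq5} and \eqref{prelimeq7}, by $\mu C_1C_P\|I_H\epsilon\|\,\|\nabla v\|$, so after squaring it contributes a multiple of $\mu^2C_1^2C_P^2\|I_H\epsilon\|^2$, which is controlled by the hypothesis $\gamma\ge2\mu^2C_1^6C_P^2H^2\nu^{-1}$ exactly as the corresponding $w_k+z_k-u$ term is handled in passing from \eqref{eq25} to \eqref{eq26}. Collecting these, the analog of \eqref{eq38} becomes a recursion
\[
a_k\le\alpha^2HR\,a_{k-1}+c\,\mu\|I_H\epsilon\|^2,\qquad a_k:=\tfrac{\nu}{4}\|\nabla e_k^u\|^2+\lambda\|e_k^u\|^2+\tfrac{\gamma^{-1}}{2}\|p-p_k\|^2,
\]
for a fixed constant $c$; since the forcing does not depend on $k$, Lemma~\ref{lem:geometric} applied with $r=(\alpha^2HR)^{-1}$ gives \eqref{thm2b} after tracking the constant, and the one-step hypothesis $\nu\|\nabla e_{k-1}^u\|^2\le\|\nabla u\|^2$ propagates inductively to all $k$ provided the noise-to-solution ratio is not too large.

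Finally, \eqref{thm2c} follows by letting $k\to\infty$ in \eqref{thm2b}: when $\alpha^2HR<1$ the geometric term $(\alpha^2HR)^{k+1}(\cdots)$ is eventually dominated by $\mu\|I_H\epsilon\|^2$, so for $k$ large $a_k$ is at most a fixed multiple of $\mu\|I_H\epsilon\|^2$; discarding the nonnegative $\tfrac{\nu}{4}\|\nabla e_k^u\|^2$ and $\tfrac{\gamma^{-1}}{2}\|p-p_k\|^2$ and substituting $\lambda=\tfrac{\nu}{2C_1^2H^2}$ yields the stated bound $\|e_k^u\|^2\le\tfrac{6C_1^2H^2\mu}{\nu}\|I_H\epsilon\|^2$. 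I expect the main obstacle to be precisely the bookkeeping of the previous paragraph: one must verify simultaneously that the noise term never degrades the constant $\lambda$ on the left of \eqref{thm2b} and that the accumulated noise constant stays $O(\mu)$ rather than $O(\mu^2/\nu)$---both of which hinge on carrying out the Young splittings against the $\mu\|I_H(\cdot)\|^2$ and grad-div left-hand-side terms rather than against the diffusion term.
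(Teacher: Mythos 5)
Your proposal follows the paper's proof essentially verbatim: the noise enters the step $k.1$ and $k.3$ error equations as $-\mu(I_H\epsilon,I_Hv)$, is absorbed by Young's inequality against the $\mu\|I_H(\cdot)\|^2$ term already on the left (halving its coefficient, which is exactly why the paper redefines $\lambda=\min\{\mu/2,\nu/(2C_1^2H^2)\}$ there), the resulting recursion $a_k\le\alpha^2HR\,a_{k-1}+c\,\mu\|I_H\epsilon\|^2$ is closed with Lemma~\ref{lem:geometric} with $r=(\alpha^2HR)^{-1}$, and the $L^2$ bound \eqref{thm2c} is read off from the $\lambda\|e_k^u\|^2$ term once the geometric part has decayed. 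The only divergence is that you also track a noise contribution in the combined $k.1$ plus $k.2$ pressure-recovery equation and control it with the hypothesis on $\gamma$, whereas the paper asserts that step $k.2$ carries no noise term and lets the noise enter the pressure bound only through substituting the step-$k.1$ estimate for $e_k^z$; your accounting is the more careful of the two and affects only the unnamed constant.
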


\begin{remark}
Theorem \ref{conv2} shows that with noisy partial solution data, provided $\alpha^2HR<1$, the CDA-IPY will converge linearly to the level of the noise.  The estimate \eqref{thm2b} suggests that the convergence is limited by the size of the noise scaled by $\mu^{1/2}$. However, the $L^2$ velocity estimate \eqref{thm2c} shows a reduced scaling by $O\left( H \mu^{1/2} \nu^{-1/2} \right)$.

\end{remark}

\begin{proof}
We proceed similar to the case of no noise in the previous section by following the proof of Theorem \ref{conv1} but handling the noise terms as they arise.  We again set $e_k^z := u - z_k$ and $e_k^u := u - u_k$. Subtracting Step k.1 \eqref{noise1} from the NSE \eqref{eq4}, we get that
\begin{multline*}
    \nu (\nabla e_k^z, \nabla v) + \gamma (\nabla \cdot e_k^z, \nabla \cdot v) + b(e_{k-1}^u, u, v) + b(u_{k-1}, e_k^z, v) + \mu(I_H e_k^z, I_H v)  \\ = (p - p_{k-1}, \nabla \cdot v) - \mu(I_H \epsilon,I_H v). 
\end{multline*}
Choosing $v = e_k^z$ now provides
\begin{equation}
     \nu ||\nabla e_k^z||^2 + \gamma ||\nabla \cdot e_k^z||^2 + \mu ||I_H e_k^z||^2 = (p - p_{k-1}, \nabla \cdot e_k^z) - b(e_{k-1}^u, u, e_k^z)- \mu(I_H \epsilon,I_H e_k^z), \label{N2}
\end{equation}
and after majorizing the first two right hand side terms of \eqref{N2} as in the proof of Theorem \ref{conv1} and applying Young's inequality to the last term, we get the bound
\begin{equation}
    \nu ||\nabla e_k^z||^2 + \gamma ||\nabla \cdot e_k^z||^2 + \frac{\mu}{2}  ||I_H e_k^z||^2 \leq ||p - p_{k-1}|| ||\nabla \cdot e_k^z|| + \nu \alpha ||e_{k-1}^u||^{\frac{1}{2}} ||\nabla e_{k-1}^u||^{\frac{1}{2}}||\nabla e_k^z|| + \frac{\mu}{2} \| I_H \epsilon \|^2.  \label{N3}
\end{equation}
The left hand side of \eqref{N3} can be lower bounded in a manner analogous to what is done in \eqref{eq13} (the only difference is here we have $\frac{\mu}{2}$ instead of $\mu$) to obtain
\begin{equation}
  \nu ||\nabla e_k^z||^2 + \gamma ||\nabla \cdot e_k^z||^2 + \frac{\mu}{2}  ||I_H e_k^z||^2
  \ge 
  \gamma ||\nabla \cdot e_k^z||^2 + \lambda|| e_k^z||^2 + \frac{\nu}{2} ||\nabla e_k^z||^2, \label{N4}
\end{equation}
where $\lambda = \min \{\frac{\mu}{2}, \frac{\nu}{2C_1^2H^2} \}$.  Putting \eqref{N3} together with \eqref{N4} after bounding the first two right hand side terms of \eqref{N3} using Young's inequality gives us
\begin{equation}
    \frac{\gamma}{2} ||\nabla \cdot e_k^z||^2  + \frac{\nu}{4} ||\nabla e_k^z||^2 + \lambda|| e_k^z||^2 \leq \frac{\gamma^{-1}}{2}||p - p_{k-1}||^2 + \nu \alpha^2 ||e_{k-1}^u|| ||\nabla e_{k-1}^u|| + \frac{\mu}{2} \| I_H \epsilon \|^2. \label{N5}
\end{equation}

Notice that Step k.3 only differs from Step k.1 in the variable names; otherwise, the structure is the same.  Hence, proceeding analogously to the analysis above that produced \eqref{N5}, we obtain
\begin{equation}
    \frac{\gamma}{2} ||\nabla \cdot e_k^u||^2  + \frac{\nu}{4} ||\nabla e_k^u||^2 + \lambda|| e_k^u||^2 \leq \frac{\gamma^{-1}}{2}||p - p_{k}||^2 + \nu \alpha^2 ||e_{k-1}^u|| ||\nabla e_{k-1}^u|| + \frac{\mu}{2} \| I_H \epsilon \|^2. \label{N6}
\end{equation}

For Step k.2, there is no noise term.  Hence we utilize the Step k.2 analysis from the proof of Theorem \ref{conv1} to get the bound
\begin{equation}
   ||p - p_k||^2 \leq 6\beta^{-2}\gamma \left[(17\alpha^2 + 2\alpha^2 \nu^{\frac{1}{2}} + 17\alpha^2 \nu) ||e_k^z|| ||\nabla e_k^z|| + 17\alpha^2 \nu ||e_{k-1}^u||||\nabla e_{k-1}^u|| \right]. \label{N7}
\end{equation}
Using the bound \eqref{N5} in \eqref{N7}, multiplying both sides by $\frac{\gamma^{-1}}{2}$, and reducing yields
\begin{multline}
  \frac{\gamma^{-1}} {2}  ||p - p_k||^2 \leq 
   3\beta^{-2} \bigg( 
   (34\alpha^2\nu^{-1/2} \lambda^{-1/2} + 4\alpha^2 \lambda^{-1/2}  + 34\alpha^2 \nu^{1/2} \lambda^{-1/2} )(  \frac{\gamma^{-1}}{2}||p - p_{k-1}||^2 ) \\
   +   (34\alpha^4\nu^{1/2}\lambda^{-1/2} + 4\alpha^4 \nu\lambda^{-1/2} + 34\alpha^4 \nu^{3/2} \lambda^{-1/2} + 17 \alpha^2 \nu)   ||e_{k-1}^u|| ||\nabla e_{k-1}^u||   \\
      + (17\alpha^2\nu^{-1/2}\lambda^{-1/2} + 2\alpha^2\lambda^{-1/2} + 17\alpha^2 \nu^{1/2} \lambda^{-1/2}) \mu  \| I_H \epsilon \|^2   
   \bigg). \label{N8}
\end{multline}
Next, using \eqref{N8} in \eqref{N6} provides
\begin{multline}
     \frac{\gamma}{2} ||\nabla \cdot e_k^u||^2  + \frac{\nu}{4} ||\nabla e_k^u||^2 + \lambda|| e_k^u||^2  
     \leq 
     \\ 
 + 3 \beta^{-2} \bigg( 
   (34\alpha^2\nu^{-1/2} \lambda^{-1/2} + 4\alpha^2 \lambda^{-1/2}  + 34\alpha^2 \nu^{1/2} \lambda^{-1/2} )(  \frac{\gamma^{-1}}{2}||p - p_{k-1}||^2 ) \\
   +   (34\alpha^4\nu^{1/2}\lambda^{-1/2} + 4\alpha^4 \nu\lambda^{-1/2} + 34\alpha^4 \nu^{3/2} \lambda^{-1/2} + 17 \alpha^2 \nu + \frac{\nu\alpha^2\beta^2}{3})   ||e_{k-1}^u|| ||\nabla e_{k-1}^u||   \\
     + (17\alpha^2\nu^{-1/2}\lambda^{-1/2} + 2\alpha^2\lambda^{-1/2} + 17\alpha^2 \nu^{1/2} \lambda^{-1/2} + \frac{\beta^{2}}{6}) \mu  \| I_H \epsilon \|^2 
   \bigg).
  \label{N9}
\end{multline}
Adding \eqref{N9} to \eqref{N8} now gives us
\begin{multline}
     \frac{\nu}{4} ||\nabla e_k^u||^2 + \lambda|| e_k^u||^2  +  \frac{\gamma^{-1}} {2}  ||p - p_k||^2
     \leq 
     \\ 
 + 6\beta^{-2} 
   (34\alpha^2\nu^{-1/2} \lambda^{-1/2} + 4\alpha^2 \lambda^{-1/2}  + 34\alpha^2 \nu^{1/2} \lambda^{-1/2} )(  \frac{\gamma^{-1}}{2}||p - p_{k-1}||^2 ) \\
   +    6\beta^{-2} (34\alpha^4\nu^{1/2}\lambda^{-1/2} + 4\alpha^4 \nu\lambda^{-1/2} + 34\alpha^4 \nu^{3/2} \lambda^{-1/2} + 17 \alpha^2 \nu + \frac{\nu\alpha^2\beta^2}{6})   ||e_{k-1}^u|| ||\nabla e_{k-1}^u||   \\
     +  6\beta^{-2} (17\alpha^2\nu^{-1/2}\lambda^{-1/2} + 2\alpha^2\lambda^{-1/2} + 17\alpha^2 \nu^{1/2} \lambda^{-1/2} + \frac{\beta^{2}}{12}) \mu  \| I_H \epsilon \|^2,
  \label{N10}
\end{multline}
noting that we dropped a non-negative term on the left hand side and distributed the $\beta^{-2}$.  Note that since $\lambda = \frac{\nu}{2C_1^2 H^2}$ by assumption on $\mu$, \eqref{N10} is the same bound as \eqref{eq33} except for the noise term 
\[
 6\beta^{-2} (17\alpha^2\nu^{-1/2}\lambda^{-1/2} + 2\alpha^2\lambda^{-1/2} + 17\alpha^2 \nu^{1/2} \lambda^{-1/2} + \frac{\beta^{2}}{12}) \mu  \| I_H \epsilon \|^2,
\]
and thus using identical analysis from \eqref{eq33} to the end of the proof of Theorem \ref{conv1} (but carrying along the noise term) yields
\begin{multline}
    \frac{\nu}{4} ||\nabla e_k^u||^2 + \lambda|| e_k^u||^2 + \frac{\gamma^{-1}}{2}||p - p_k||^2\leq  \alpha^2 HR \left[ \frac{\nu}{4} ||\nabla e_{k-1}^u||^2 + \lambda|| e_{k-1}^u||^2 + \frac{\gamma^{-1}}{2}||p - p_{k-1}||^2 \right] \\
    +  (2\alpha^2 H R + \frac{1}{2} ) \mu  \| I_H \epsilon \|^2.
    \label{N11}
\end{multline}
Applying Lemma \ref{lem:geometric} to \eqref{N11} with $r=\frac{1}{\alpha^2 H R}$ and $B= (2\alpha^2 H R + \frac{1}{2} ) \mu  \| I_H \epsilon \|^2$ and using the assumption $\alpha^2 H R < \frac12$, we get that
\begin{multline}
    \frac{\nu}{4} ||\nabla e_k^u||^2 + \lambda|| e_k^u||^2 + \frac{\gamma^{-1}}{2}||p - p_k||^2
    \leq   \left[ \frac{\nu}{4} ||\nabla e_{0}^u||^2 + \lambda|| e_{0}^u||^2 + \frac{\gamma^{-1}}{2}||p - p_{0}||^2 \right] \left(\alpha^2 H R\right)^{k+1} \\
    +  (4\alpha^2 H R + 1) \mu  \| I_H \epsilon \|^2.
    \label{N12}
\end{multline}
This proves the first estimate of the theorem.  For the second, taking $k$ sufficiently large, using that $\mu>\frac{\nu}{2C_1^2H^2}$ and the Poincar\'e inequality gives the $L^2$ error estimate
\[
\left(C_P^{-2} + \frac{\nu}{2C_1^2 H^2} \right) \| e_k^u \|^2 \le  (4\alpha^2 H R + 1) \mu  \| I_H \epsilon \|^2 \le 3 \mu  \| I_H \epsilon \|^2.
\]
This bound reduces to
\[
\| e_k^u \| \le \left( \frac{6 C_1^2 H^2 \mu}{\nu} \right)^{1/2} \| I_H \epsilon \|,
\]
which finishes the proof.

\end{proof}

\section{Numerical Experiments}

We now provide results from several numerical tests that illustrate the theory above, compare to known methods, and investigate the overall usefulness of CDA-IPY.  First, we describe the test problems. Then, we give results for partial solution data with no noise. Finally, we present results with noise.  For the nudging in all tests, we use algebraic nudging from \cite{RZ21}, which is a particular implementation of the $L^2$ projection onto a coarse piecewise-constant finite element space.  All tests will use $u_0=0$ as the initial iterate, and unless otherwise mentioned, we use $\mu=1$ and $\gamma=1$.

\subsection{Description of test problems}

We now describe the test problems used in the numerical experiments to follow.  

\subsubsection{2D driven cavity}
The setup for the 2D driven cavity has the domain as the unit square $\Omega=(0,1)^2$, no forcing $f=0$, homogeneous Dirichlet boundary conditions for velocity enforced on the sides and bottom, and $[ 1,0 ]^T$ on the top.  Our tests use $Re:=\nu^{-1}$ with $Re$=5000 and 10000.  Solutions for these $Re$ from our tests below are shown in Figure \ref{dc2d} and are in agreement with solutions in the literature \cite{ECG05}.  

\begin{figure}[ht]
\center
$Re=5,000$ \hspace{1in}  $Re=10,000$ \\
\includegraphics[width = .3\textwidth, height=.28\textwidth,viewport=115 45 465 390, clip]{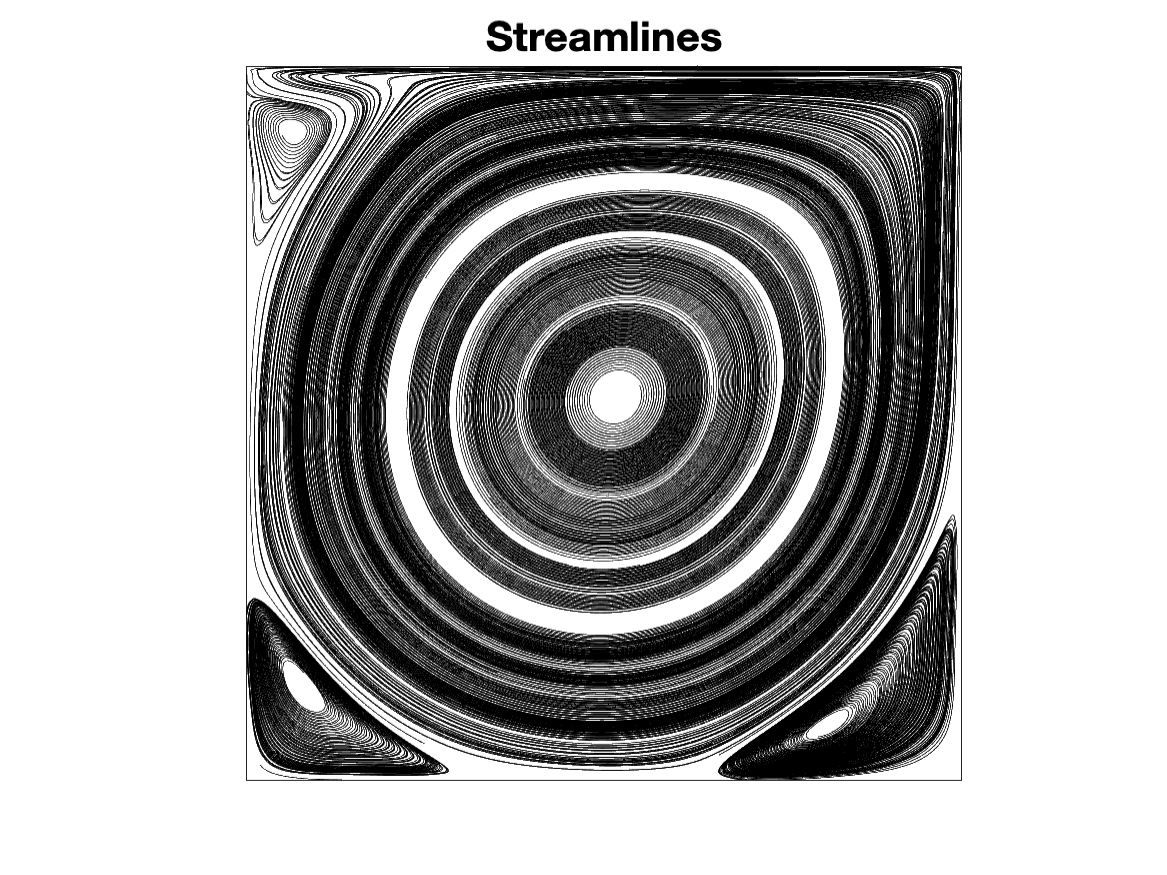}  
\includegraphics[width = .3\textwidth, height=.28\textwidth,viewport=115 45 465 390, clip]{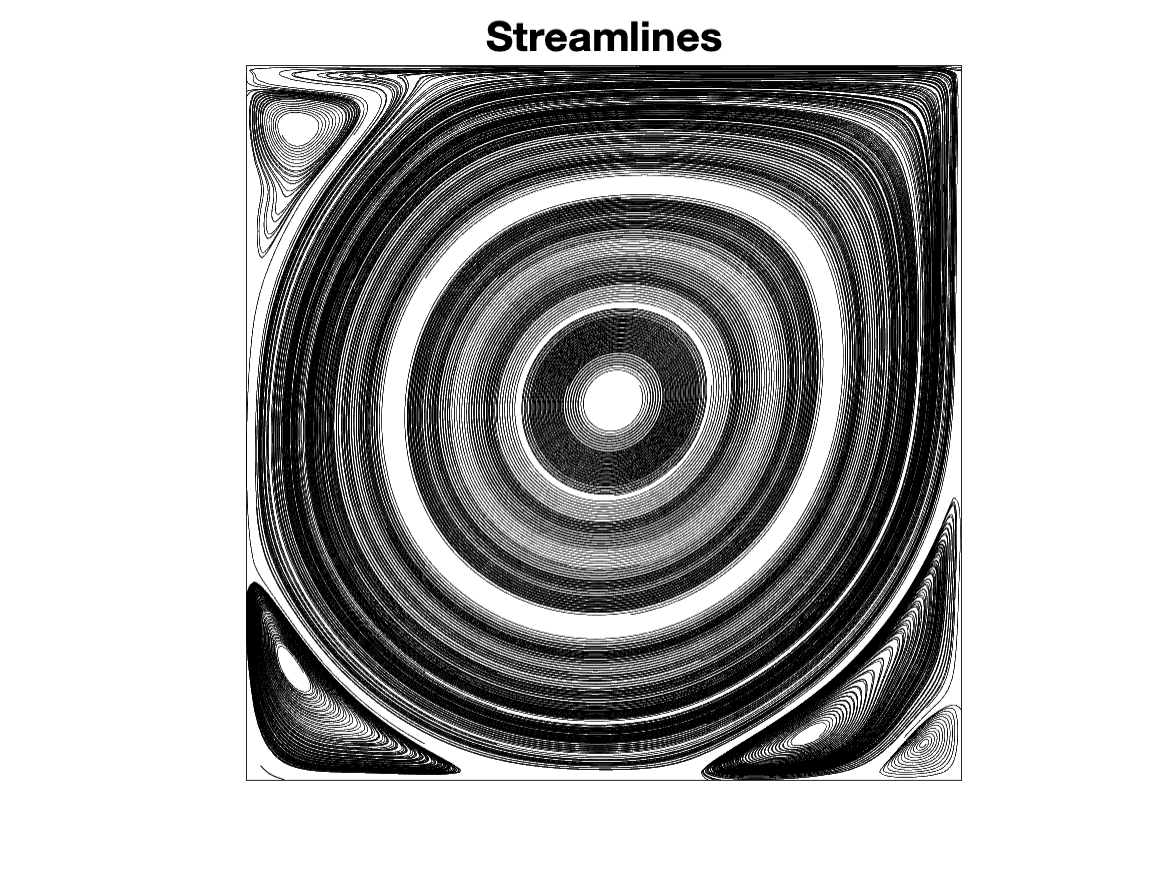}  
\caption{\label{dc2d} The plots above show streamlines of the solution of the 2D driven cavity problem at $Re=$5,000 (left) and 10,000 (right).}
\end{figure}

Our computations use $(P_2,P_1^{disc})$ Scott-Vogelius elements on barycenter refinements of $h=\frac{1}{128}$ and $\frac{1}{196}$ uniform triangular meshes, i.e. we use $\frac{1}{256}$ and $\frac{1}{392}$ which provide for 680K degrees of freedom (dof) and 1.6M dof, respectively.  We note that Scott-Vogelius elements of this order are known to be inf-sup stable on meshes with this structure \cite{arnold:qin:scott:vogelius:2D,JLMNR17}.  

\subsubsection{3D driven cavity}

The 3D lid-driven cavity benchmark problem is an analogue of the 2D problem described above: the domain is a unit cube, there is no forcing $f=0$, homogeneous Dirichlet boundary conditions for velocity are enforced on the walls, but at the top of the box, $u=[1,0,0]^T$ is enforced.  Our tests use $Re$=200, 1000, and 1500.

\begin{figure}[H]
			\centering
			\includegraphics[width = 0.95\textwidth, height=.27\textwidth,viewport=100 0 1100 300, clip]{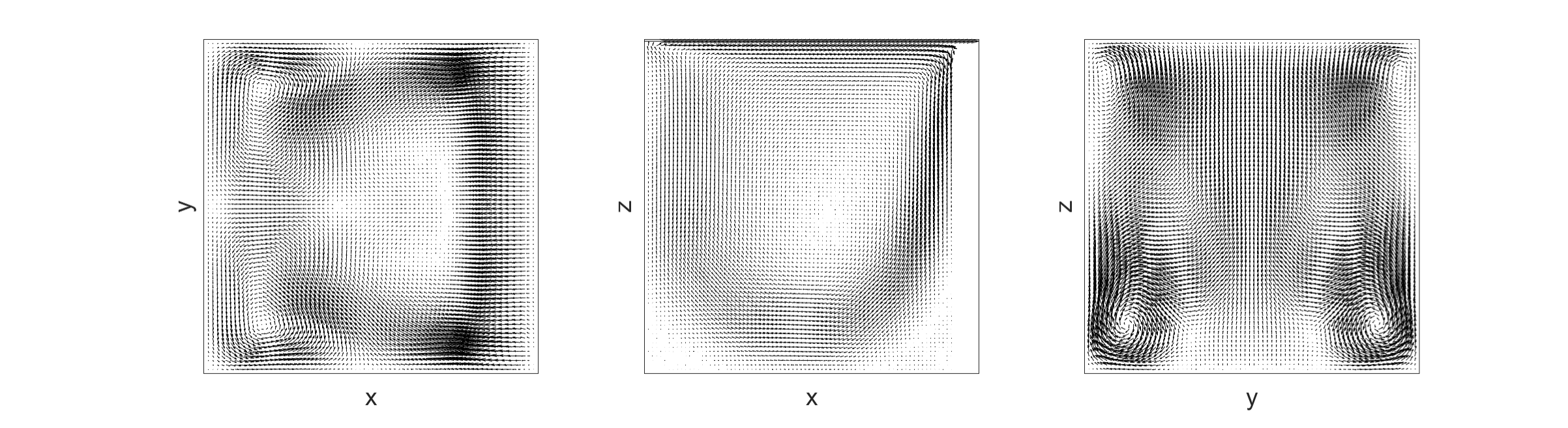}
			\caption{Shown above are the midsliceplane plots of solutions for the 3D driven cavity simulations at $Re$=1000.\label{fig:midsliceplanes}}
\end{figure}
		
To construct the mesh, we start with Chebychev points on [0,1] to create a $\mathcal{M}^3$ grid of rectangular boxes (we use $\mathcal{M}$=11 for $Re$=200 and 1000 and  $\mathcal{M}$=13 for $Re$=1500).  Each box is first split into 6 tetrahedra, and then each of these tetrahedra is split into 4 tetrahedra by a barycenter refinement / Alfeld split.  We use $(P_3, P_2^{disc})$ Scott-Vogelius elements, which provides for approximately 796K total dof for the $Re$=200  and 1000 tests and 1.3 million total dof for the $Re$=1500 tests.  This velocity-pressure pair is known to be LBB stable on such a mesh construction from \cite{zhang:scott:vogelius:3D}.   Solution plots found with this discretization matched those from the literature \cite{WongBaker2002}, and we show midspliceplanes of the $Re$=1000 solution in Figure \ref{fig:midsliceplanes}.

\subsubsection{Channel flow past a cylinder}

Our third test problem setup is the channel flow past a block benchmark. This problem has been widely studied both experimentally and numerically \cite{TGO15, R97, SDN99,HRV24}.  The computational domain is a rectangular channel of size $2.2\times0.41$ with a square obstacle of side length $0.1$ positioned at $(0.2, 0.2)$ relative to the bottom-left corner of the rectangle. We enforce no-slip velocity at the walls and block, and at the inflow and outflow, we enforce
\begin{align}
	u_1(0,y)=&u_1(2.2,y)=\frac{6}{0.41^2}y(0.41-y),\nonumber\\
	u_2(0,y)=&u_2(2.2,y)=0.
\end{align}

\begin{figure}[H]\label{cylpic}
  \centering
       \includegraphics[scale=0.11]{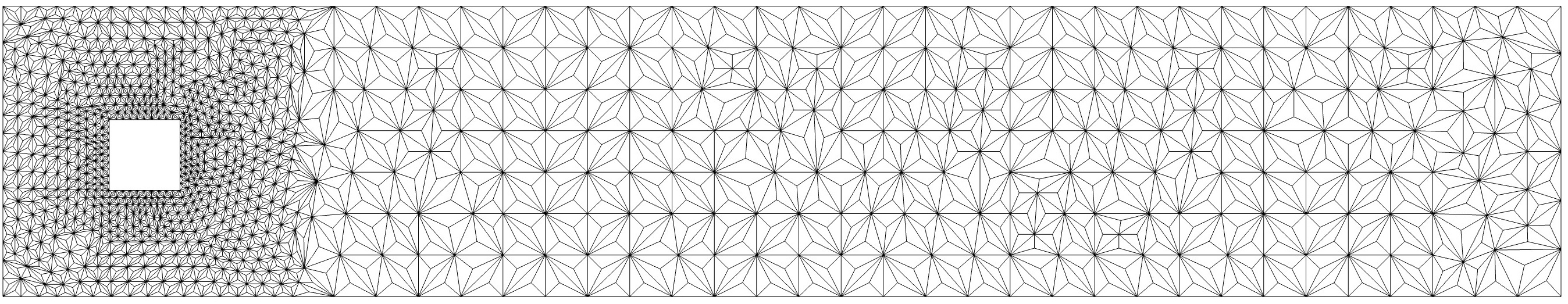}
  \caption{\label{cylpic} Shown above is a sample mesh for 2D channel flow past a block.}
\end{figure}

The external force is taken to be $f=0$. We use  $Re$=$\frac{UL}{\nu}$=100 and 150, which are calculated using the length scale $L=0.1$ as the width of the block and $U=1$, and then $\nu$ is chosen appropriately to produce these $Re$.  $(P_2, P_1^{disc})$ Scott-Vogelius elements are used on a barycenter refined Delaunay mesh that provides 120K velocity and 89K pressure dof.  Figure \ref{cylpic} shows a mesh of the domain significantly coarser than what we use in the tests below.  Figure \ref{cylref} shows a $Re$=100 steady solution as speed contours.   We note that both $Re$=100 and 150 are known to produce periodic-in-time solutions \cite{HRV24}.

\begin{figure}[H]
  \centering
       \includegraphics[scale=0.34]{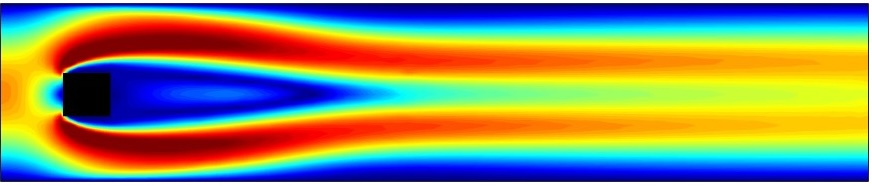} 
  \caption{\label{cylref} Shown above is a reference solution for 2D channel flow past a block with $Re$=100 shown as speed contours.}
\end{figure}

\subsection{Tests of CDA-IPY with exact partial solution data}

We now test CDA-IPY on the 2D driven cavity problem using exact partial solution data.  We compute using varying $H$, two Reynolds numbers $Re$=5000 and 1000, two meshes $h=\frac{1}{256}$ and $\frac{1}{392}$, and for comparison, we also compute CDA-Picard solutions and Picard solutions.  

Results for the 2D cavity using $Re$=5,000 are shown in Figure \ref{conv12}. We observe that for CDA-IPY $H=1/8$ does not help convergence (its convergence plot lies on top of Picard's), but $H=1/16$ does accelerate convergence significantly, and $H=1/32$ accelerates it even more.  These tests used $\mu=1$ and $\gamma=1$, but we also computed with larger $\mu$ (10, 100, 1000, and higher) and got the same results.   Computing with $\gamma=10$ gave the same results, but larger $\gamma$ values caused problems with the iterative solver convergence for the Schur complement linear system.  We also ran the same tests with CDA-Picard and varying $H$, and in Figure \ref{conv1} we observe that these results are nearly identical to those of CDA-IPY. Hence, using the more efficient CDA-IPY does not suffer in its convergence rate compared to CDA-Picard.  Lastly, we note that the convergence results are similar but slightly better on the finer mesh.

\begin{figure}[ht]
\center
$Re$=5000, $h=\frac{1}{256}$  \hspace{1.5in} $Re$=5000,   $h=\frac{1}{392}$ \\
\includegraphics[width = .48\textwidth, height=.28\textwidth,viewport=20 0 710 350, clip]{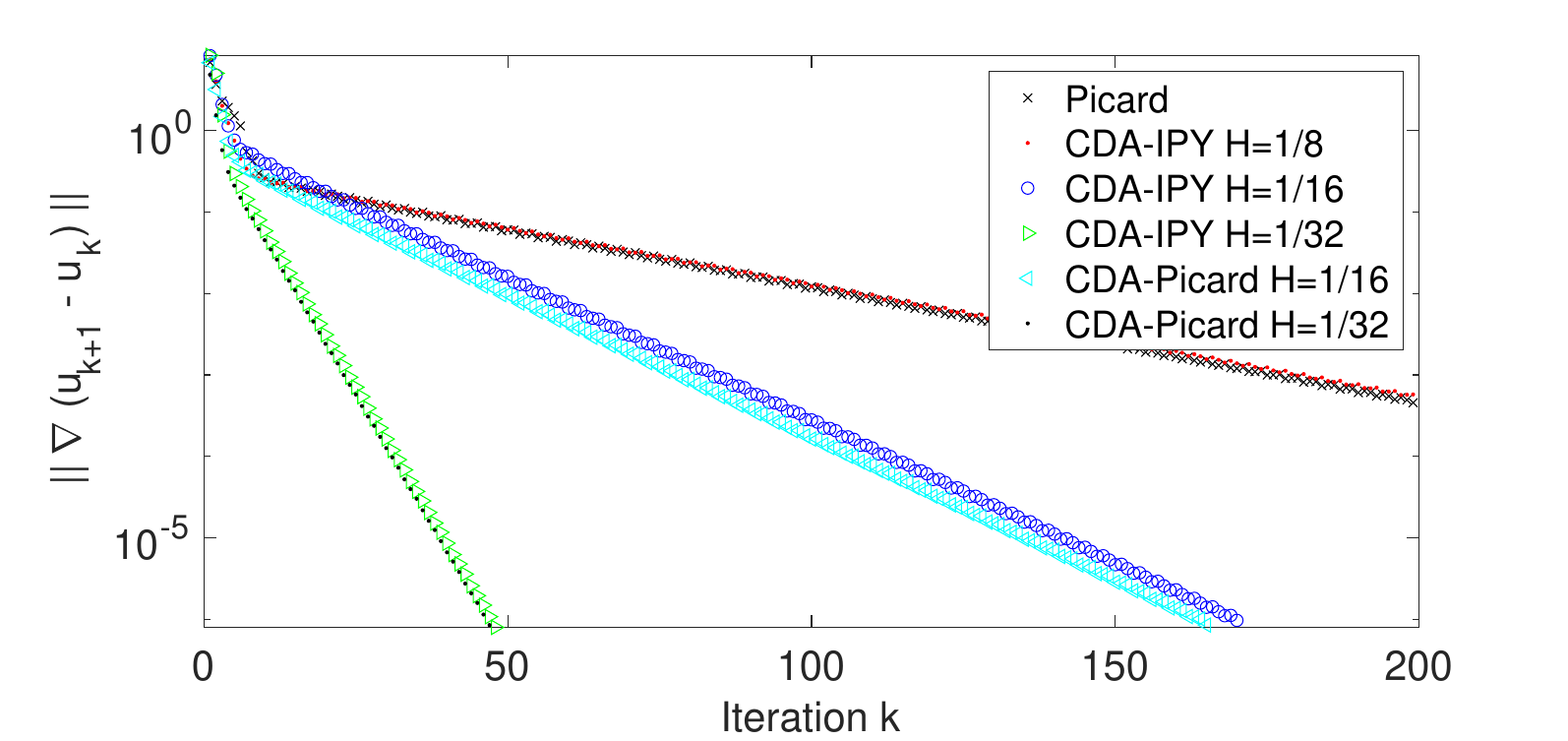}  
\includegraphics[width = .48\textwidth, height=.28\textwidth,viewport=20 0 710 350, clip]{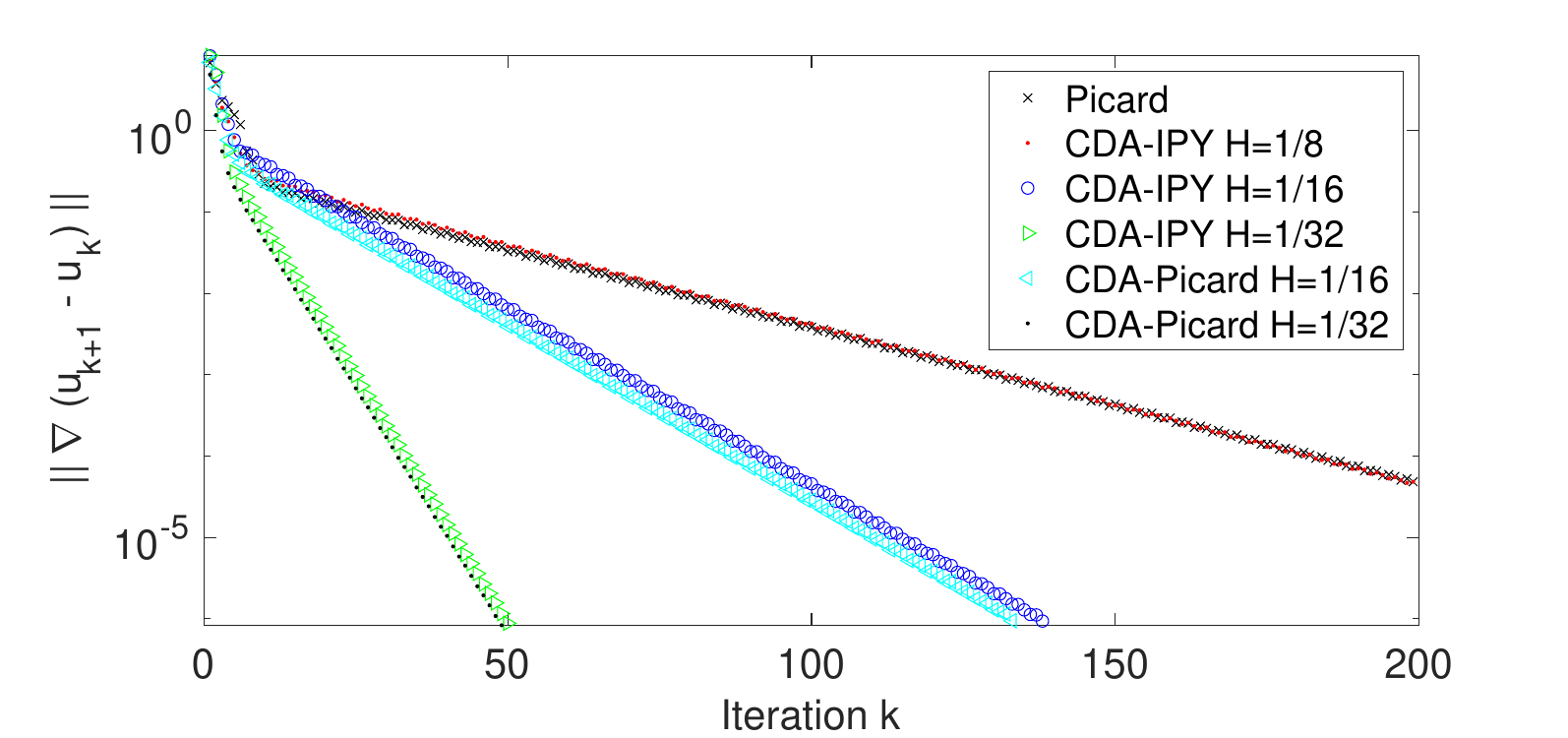}  
\caption{\label{conv12} The plots above show convergence of CDA-IPY and CDA-Picard for 2D driven cavity with $Re$=5,000 for $h=\frac{1}{256}$ (left) and $h=\frac{1}{392}$(right), varying $H$, and exact partial solution data.}
\end{figure}

Results for $Re$=10,000 2D cavity are shown in Figure \ref{conv22} for $\mu=1$ and $\gamma=1$.  We observe that Picard fails, CDA-IPY $H=1/32$ appears to provide convergence eventually (perhaps in about 1000 or so iterations using extrapolation), and CDA-IPY with $H=1/48$ and $1/64$ converge reasonably fast.  Results for $1/64$ are slightly better on the finer mesh, while the $1/48$ results are slightly worse.  Similar to the case of $Re$=5000 above, results for CDA-Picard are nearly identical to CDA-IPY and are therefore omitted.  Results for larger $\mu$ and/or $\gamma=10$ (instead of $\mu=\gamma=1$) give the same convergence plots and are also omitted.

\begin{figure}[ht]
\center
$Re$=10000, $h=\frac{1}{256}$  \hspace{1.5in} $Re$=10000,   $h=\frac{1}{392}$ \\
\includegraphics[width = .48\textwidth, height=.28\textwidth,viewport=20 0 710 350, clip]{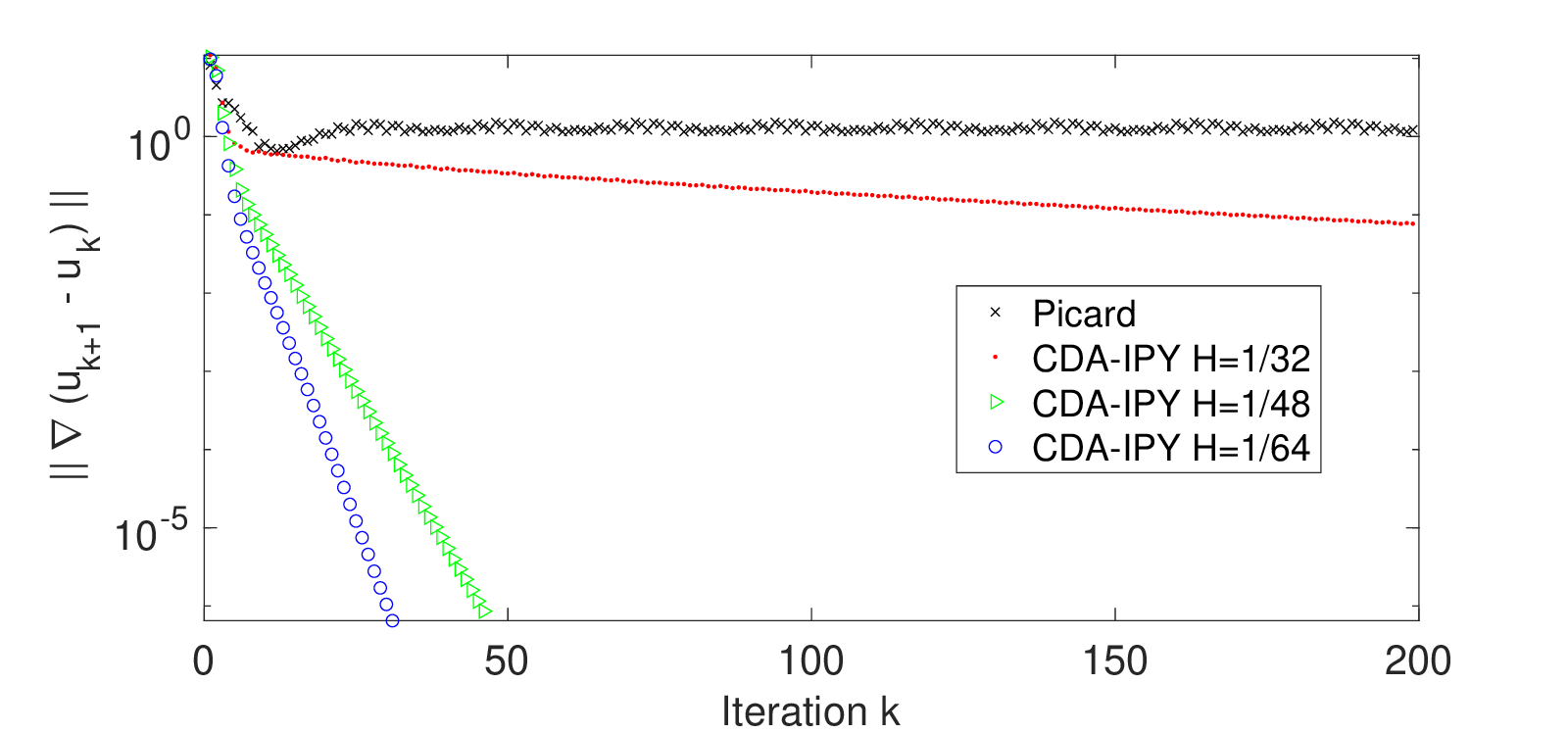}  
\includegraphics[width = .48\textwidth, height=.28\textwidth,viewport=20 0 710 350, clip]{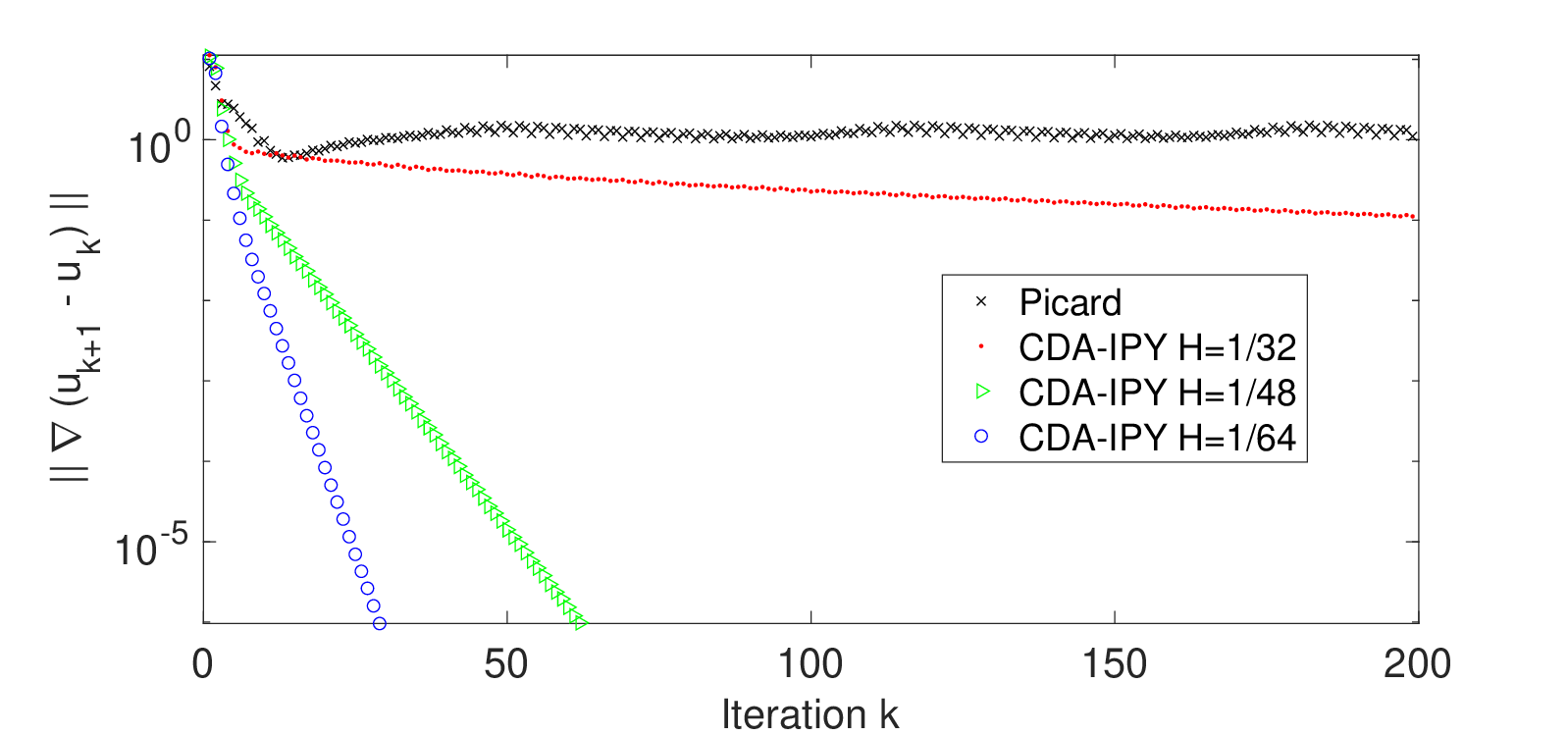}  
\caption{\label{conv22} The plots above show convergence of CDA-IPY for 2D driven cavity with $Re$=10,000 for $h=\frac{1}{256}$ (left) and $h=\frac{1}{392}$(right), varying $H$, and exact partial solution data.}
\end{figure}

Convergence results for the 3D cavity are shown in Figure \ref{conv3}, and CDA-IPY is observed to perform very well.  Picard converges (eventually) for $Re$=200 but not for 300 or above, so it is only included in the $Re$=200 plot.  For $Re$=1000, the maximum $H$ that provides convergence is $1/7$, and for $Re$=1500, it is $1/9$.  Taking $H\le 1/12$ is highly effective for these tests.  CDA-Picard was run with these same parameters, and we obtained very similar convergence results to CDA-IPY (plots omitted).

\begin{figure}[ht]
\center
$Re$=200 \hspace{2in} $Re$=1000  \\
\includegraphics[width = .48\textwidth, height=.28\textwidth,viewport=20 0 710 350, clip]{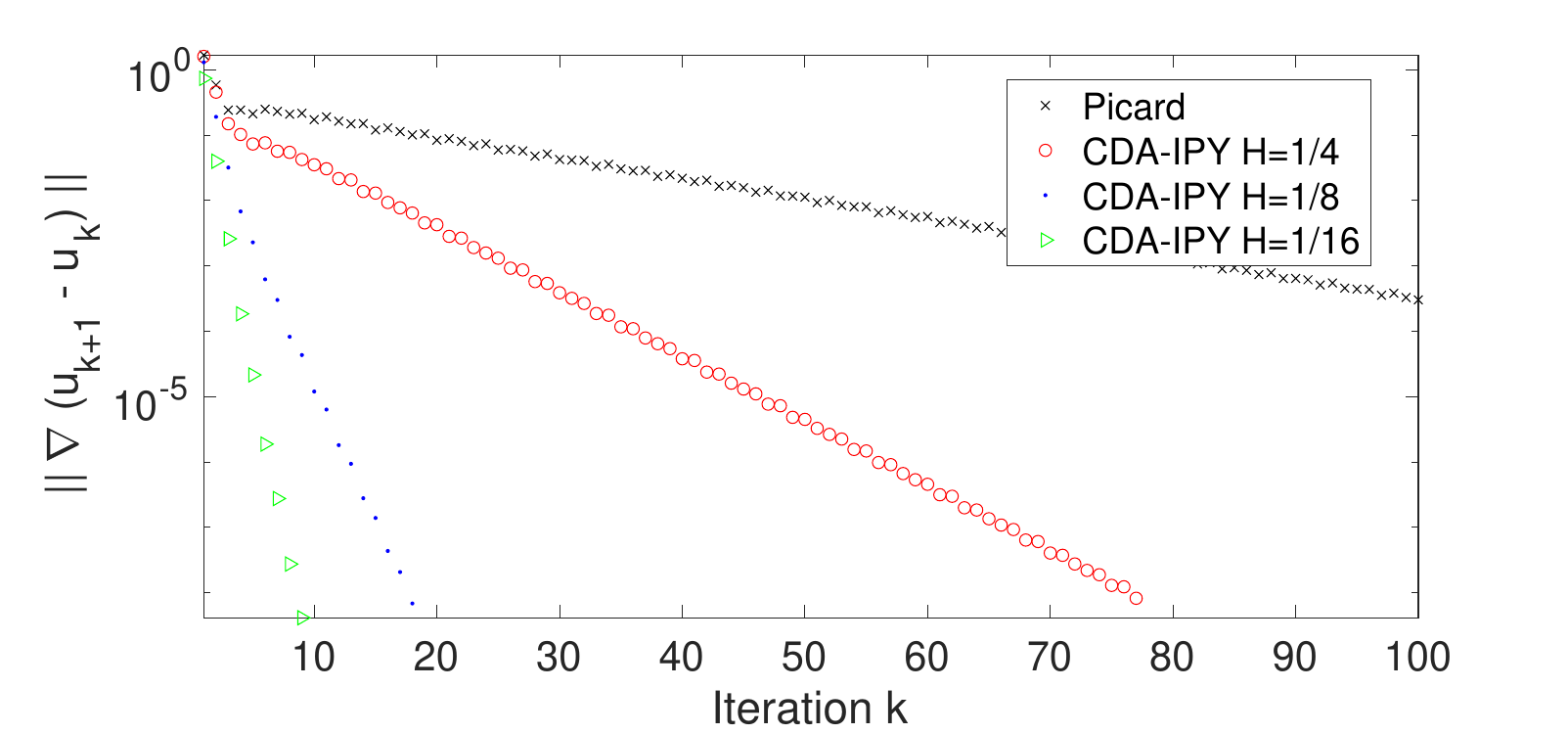}  
\includegraphics[width = .48\textwidth, height=.28\textwidth,viewport=20 0 710 350, clip]{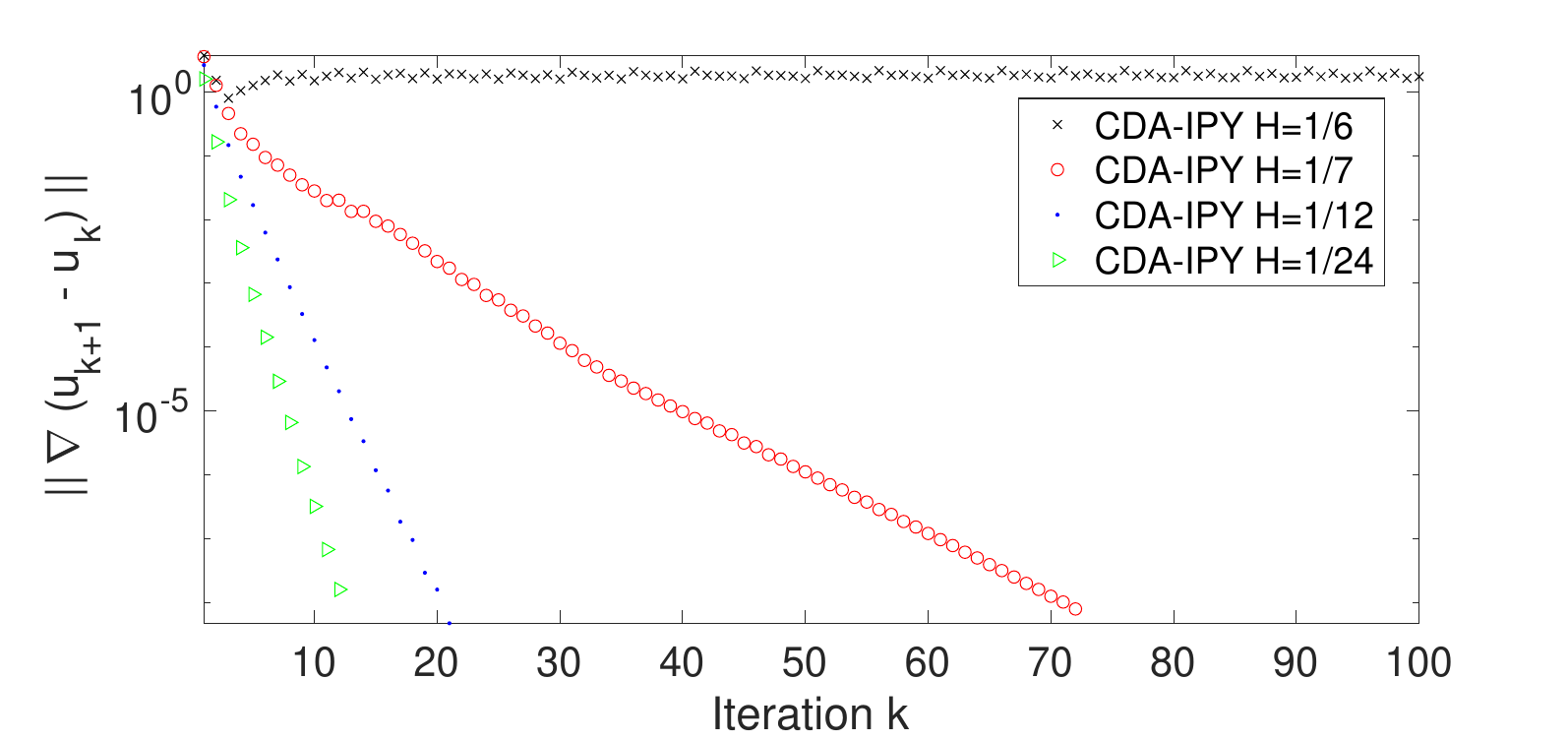}  \\
$Re$=1500\\
\includegraphics[width = .48\textwidth, height=.28\textwidth,viewport=20 0 710 350, clip]{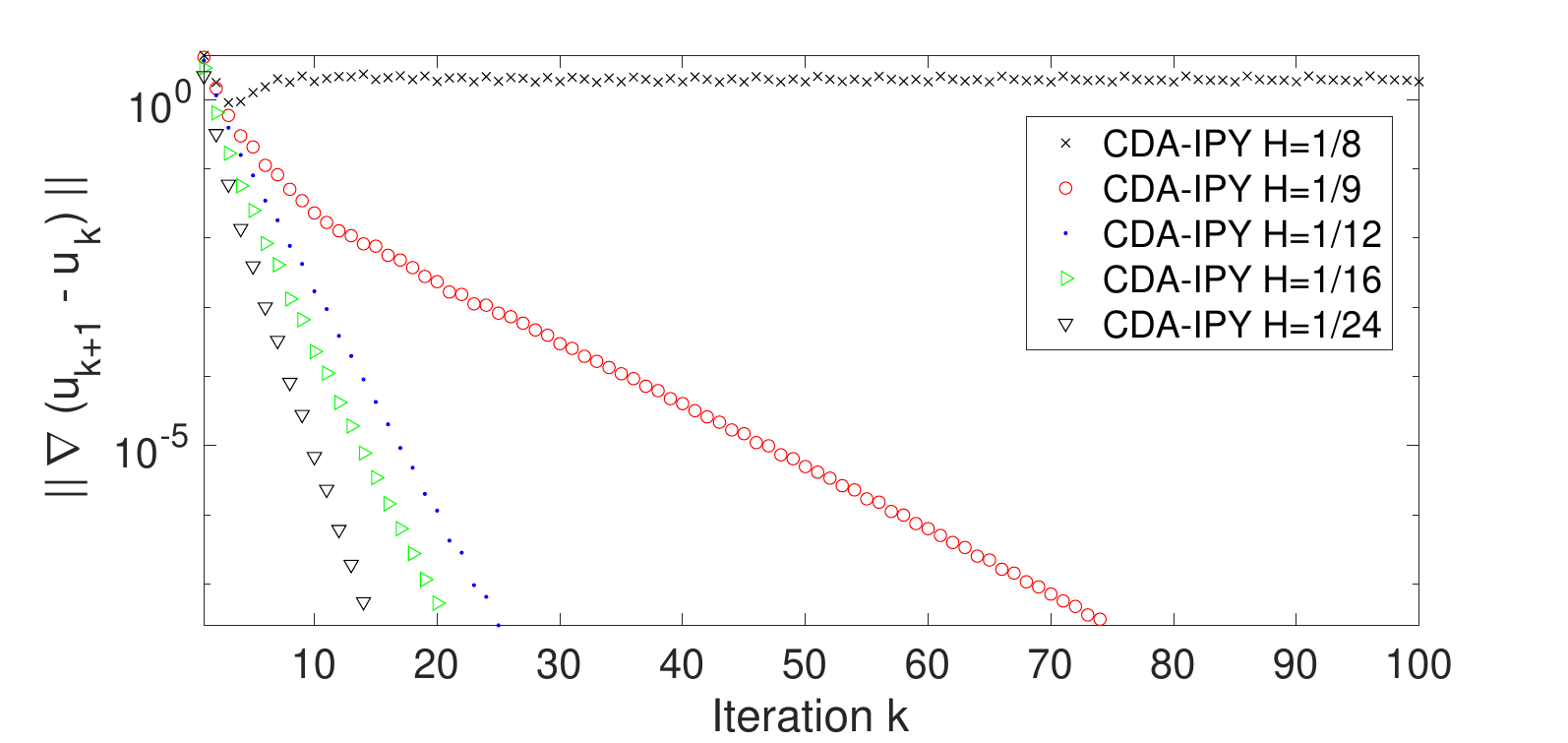}  
\caption{\label{conv3} The plots above show convergence of CDA-IPY for varying $Re$ for the 3D driven cavity problem.}
\end{figure}

Convergence plots for channel flow past a block are shown in Figure \ref{block1}.  For $Re$=100 and 150, they show the gain in convergence speed for CDA-IPY for decreasing $H$.  For $Re$=150, Picard and IPY fail to converge, and it is CDA that allows for convergence.

\begin{figure}[ht!]
\center
$Re$=100  \hspace{2.5in} $Re$=150 \\
\includegraphics[width = .48\textwidth, height=.28\textwidth,viewport=20 0 710 350, clip]{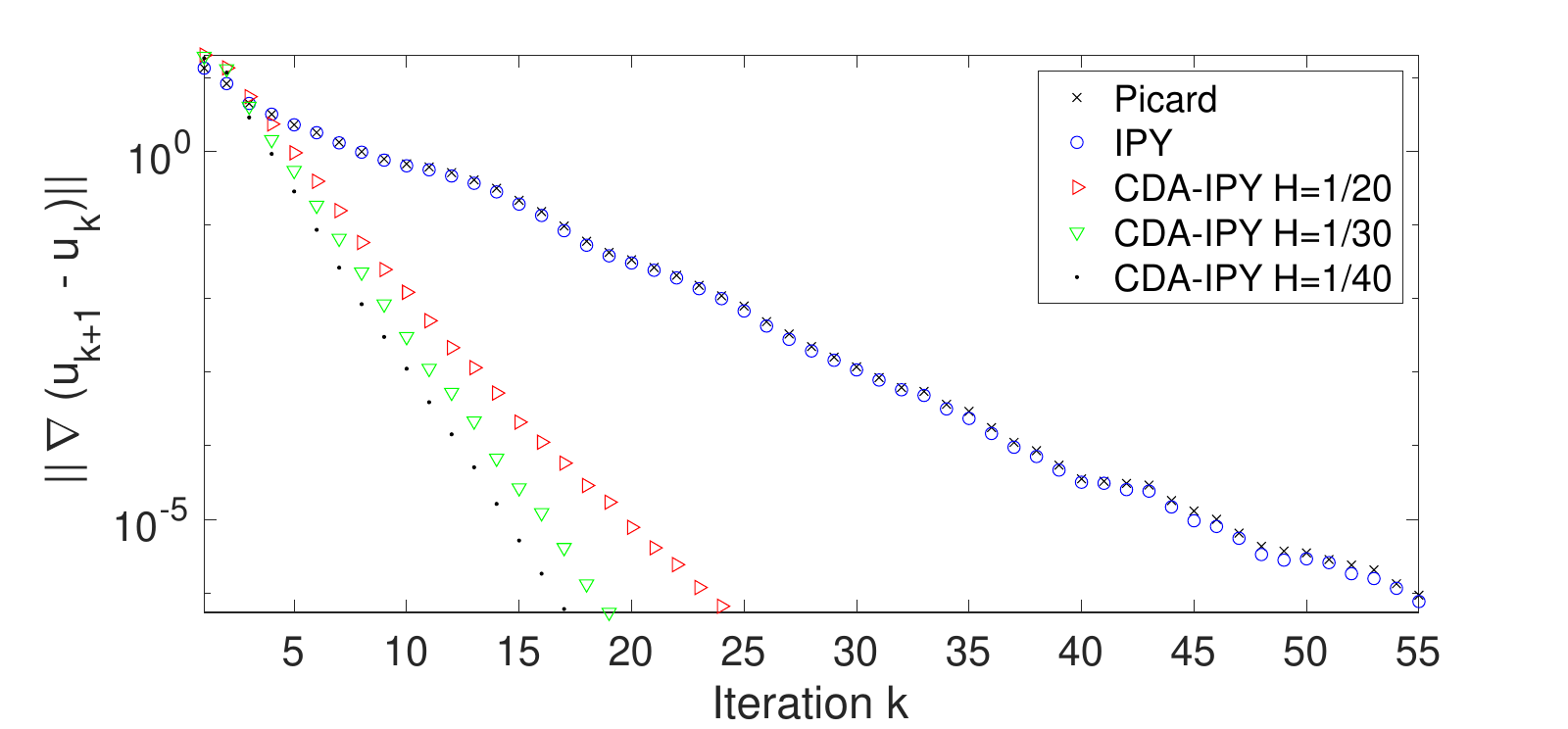}  
\includegraphics[width = .48\textwidth, height=.28\textwidth,viewport=20 0 710 350, clip]{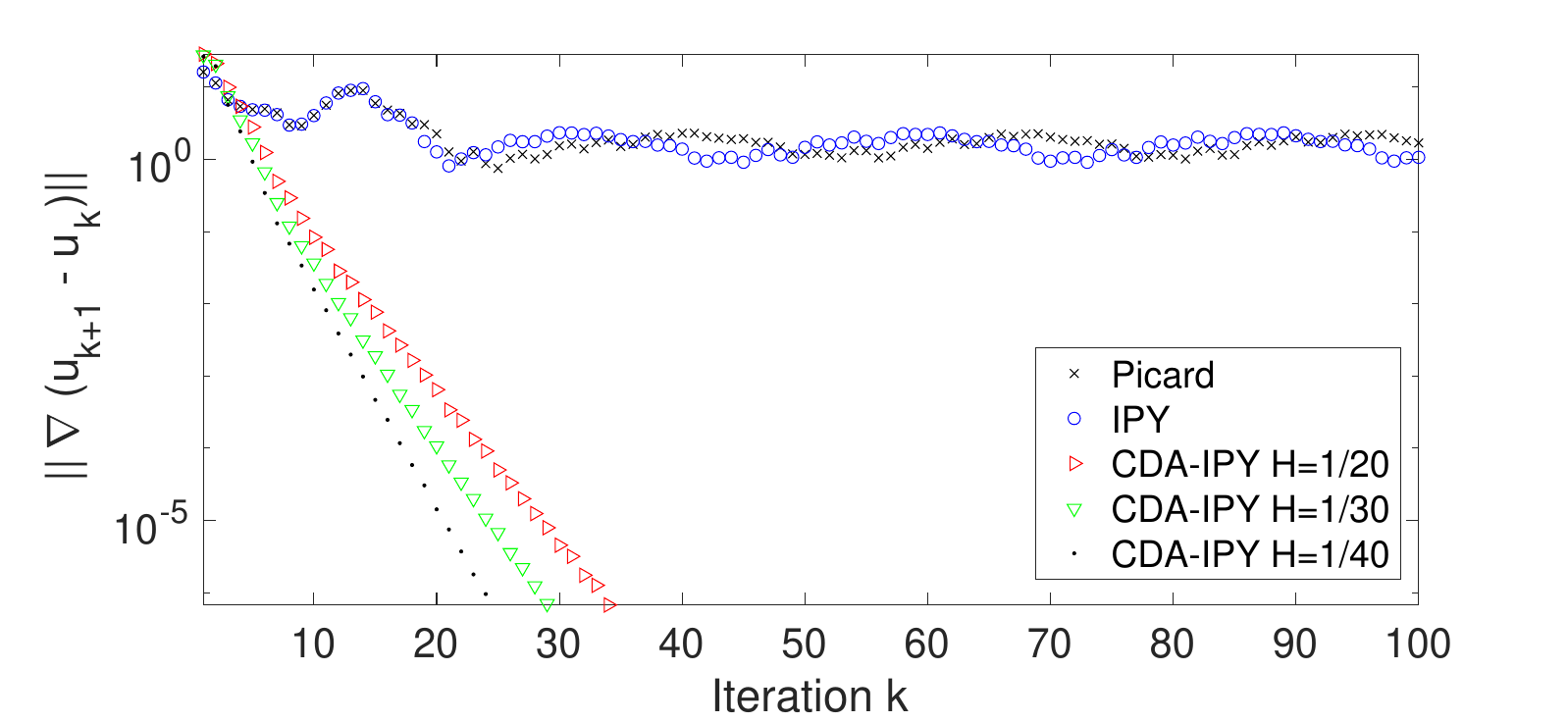}  
\caption{\label{block1} The plots above show convergence of CDA-IPY for 2D channel flow past a block with $Re$=100 (left) and 150 (right) for varying $H$ and exact partial solution data.}
\end{figure}

\subsection{Tests of CDA-IPY with noisy partial solution data}

We next illustrate the theory for CDA-IPY when the data contains noise.  Here, we expect the residual to converge to zero and the error to converge at the same rate until it bottoms out at the size of the noise.  We consider two experiments that generate noise in different ways.

For our first test, we consider CDA-IPY for the 2D channel flow past a block with $Re$=150 and $H$=1/30.  To generate the noise, we choose a signal to noise ratio (SNR). Then, we add component-wise uniform random numbers generated in [-1,1] scaled by SNR and then by the maximum absolute velocity.  This is then used to obtain the partial solution data for CDA-IPY.  Convergence results are shown in Figure \ref{noise12} for varying SNR.  In the figure on the left, we show both residuals and errors, and as expected, the residuals convergence linearly to zero, while the errors bottom out.  These errors may be considered unacceptably large, and thus it makes sense to try to improve them.  Hence, we take the first iterate from CDA-IPY that falls below $10^{-2}$ (to make sure it has bottomed out) and use it to seed the usual Newton iteration.  Results from this iteration are shown in the figure on the right, and we observe that while (unstabilized) Newton fails (blows up) and CDA-IPY + Newton fails with the largest SNR of 0.05, the CDA-IPY + Newton method is effective for SNR$\le 0.01$ as convergence is reached quickly after switching to Newton.  Hence, if the noise is not `too large', then CDA-IPY+Newton is an effective method.

\begin{figure}[ht]
\center
CDA-IPY  \hspace{2in} CDA-IPY + Newton  \\
\includegraphics[width = .48\textwidth, height=.28\textwidth,viewport=0 0 710 350, clip]{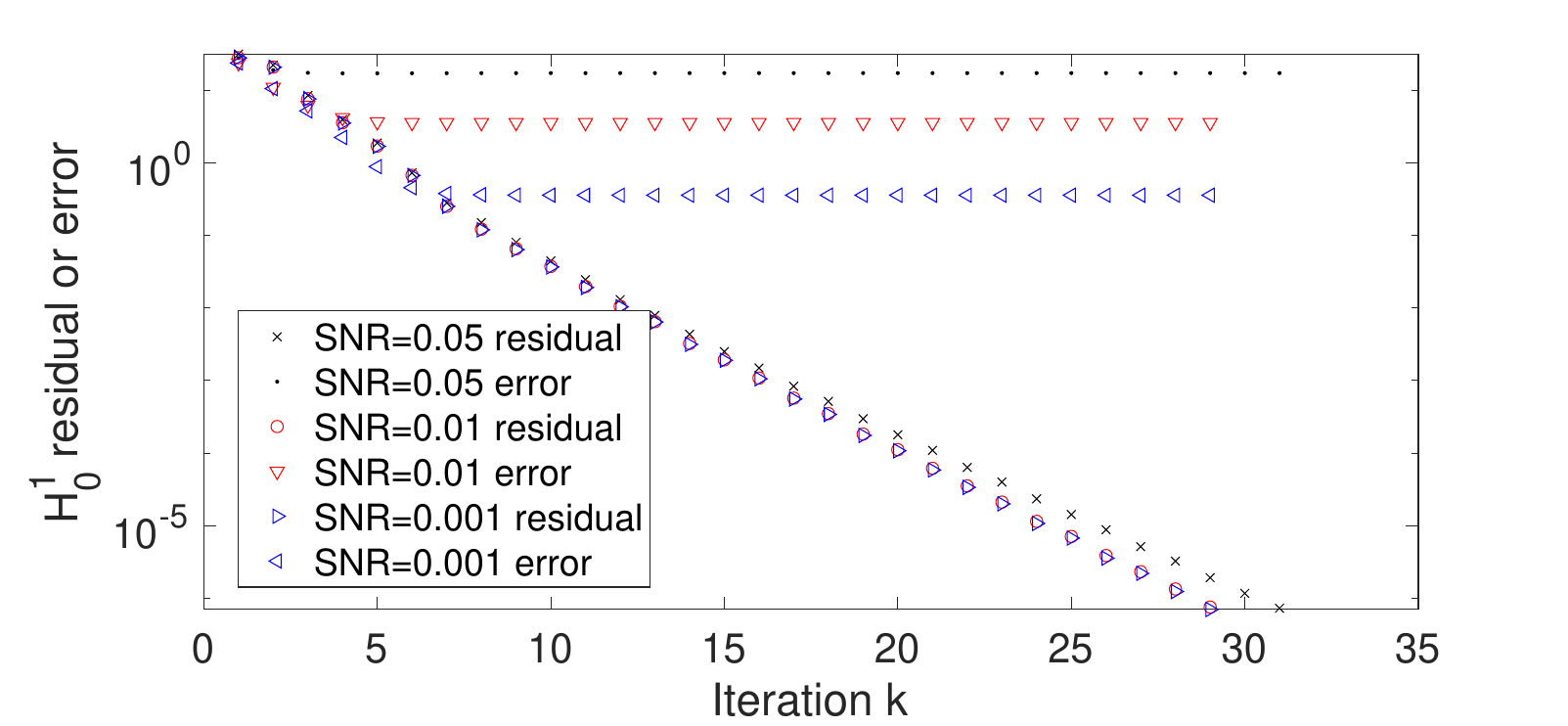}  
\includegraphics[width = .48\textwidth, height=.28\textwidth,viewport=0 0 710 350, clip]{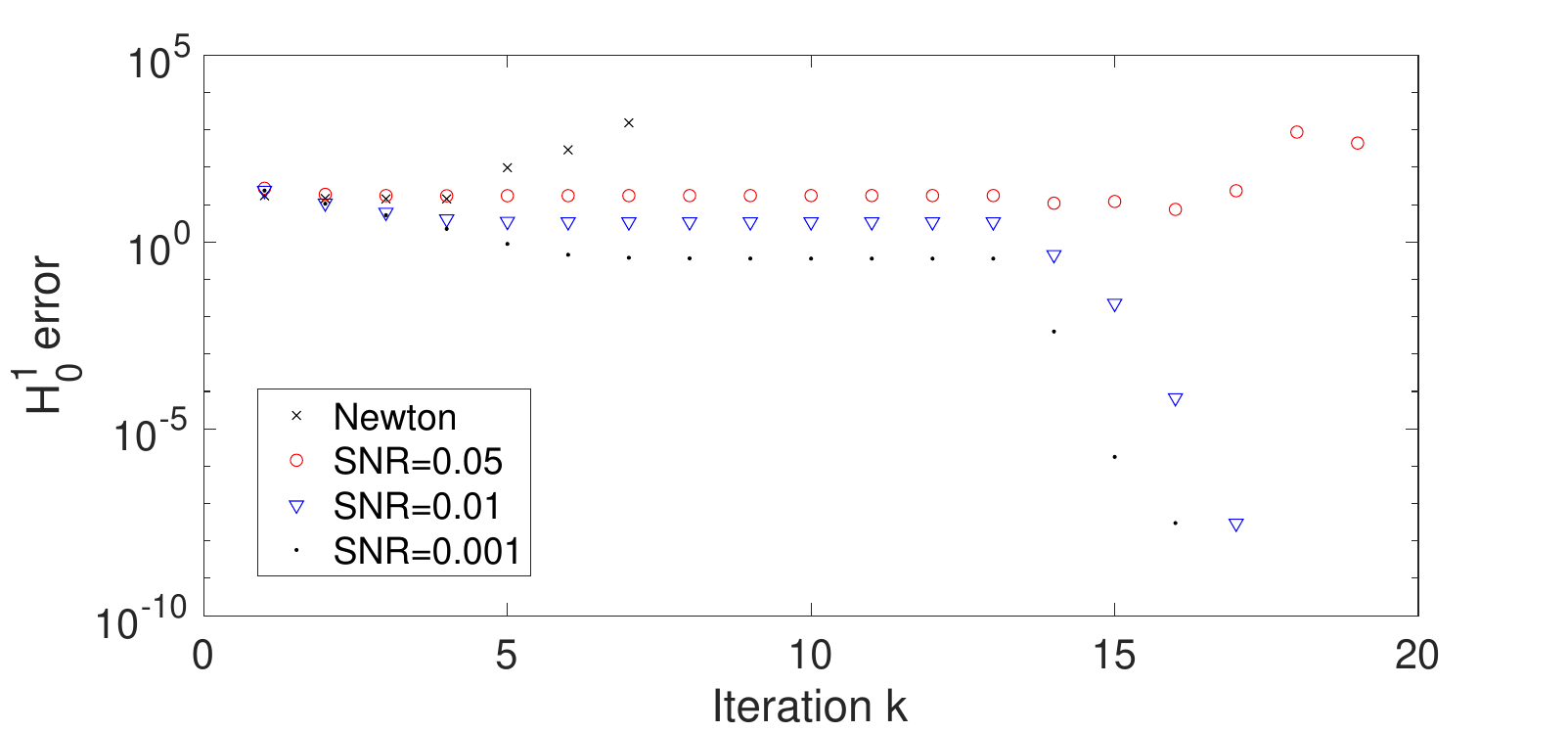}  
\caption{\label{noise12} The plots above show convergence of CDA-IPY for 2D channel flow past a block with $Re$=150 (left) and CDA-IPY+Newton [switch to Newton with no CDA after velocity residual is below $10^{-2}$] for the same problems (right) with noisy partial solution data.}
\end{figure}
%
%
%
%
%
%
%

For our second test with partial solution data that is not exact, we consider the 2D driven cavity with $Re$=5000 and $h=$1/196 with $H$=1/32.  Here, we mimic the setting of passing minimal solution data from one user to another by first taking the $32^2\times 1$ partial solution vector and generating a $32\times 32$ matrix.  An SVD of this matrix is then constructed, and a reduced SVD is then passed to another user who recovers the partial solution approximately by reconstruction from the reduced SVD (the approximation error is thus the noise). Convergence results are shown in Figure \ref{noise2}.  On the left of the figure, we plot the errors and residuals for varying ranks $r$ of reduced SVDs (r=2,4,8,16).  This is done for both the $x$ and $y$ components of the solution.

Just as in the first example with noisy partial solution data, and in agreement with the theory, the residuals converge linearly to zero but the errors bottom out (and as expected, the higher the $r$, the lower the error bottom).  On the right in Figure \ref{noise2} is CDA-IPY + Newton, where similar to the previous example, we run CDA-IPY in the same manner. Once the residual falls below $10^{-2}$ in the $H^1_0$ norm, we switch to Newton (note that Newton alone will not converge at this $Re$ with the same initial guess of 0).  For each case of $r$, once the switch to Newton is made, convergence of the error is achieved in just 3 or 4 iterations.  In other words, in this case of inaccurate partial solution data, the data was still sufficiently accurate; after CDA-IPY reached its best accuracy using the data, it was sufficiently close that Newton could quickly converge to the true solution.

Table \ref{noise3} shows the dramatic reduction in data that is possible with the CDA-IPY + Newton method.  Even without compression, the data that needed to be passed from the $2*32^2$ partial solution data was sufficient to reconstruct the solution with about half a percent of the amount of original data (2048 floating point numbers instead of 394,242).  With SVD compression, the amount of data that needs to be passed can be significantly reduced further, and the true solution is still recoverable.

\begin{figure}[ht]
\center
CDA-IPY  \hspace{2in} CDA-IPY + Newton  \\
\includegraphics[width = .48\textwidth, height=.28\textwidth,viewport=0 0 710 350, clip]{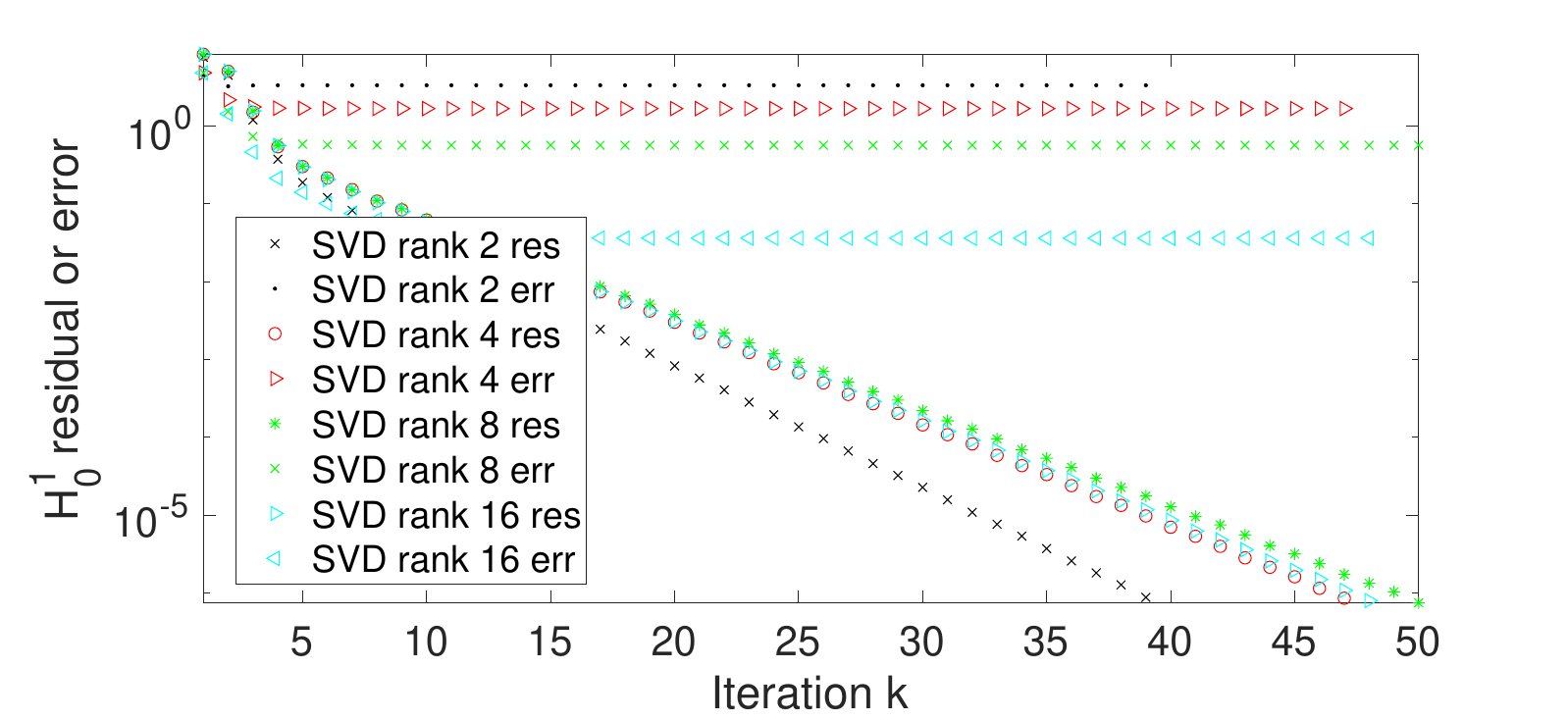}  
\includegraphics[width = .48\textwidth, height=.28\textwidth,viewport=0 0 710 350, clip]{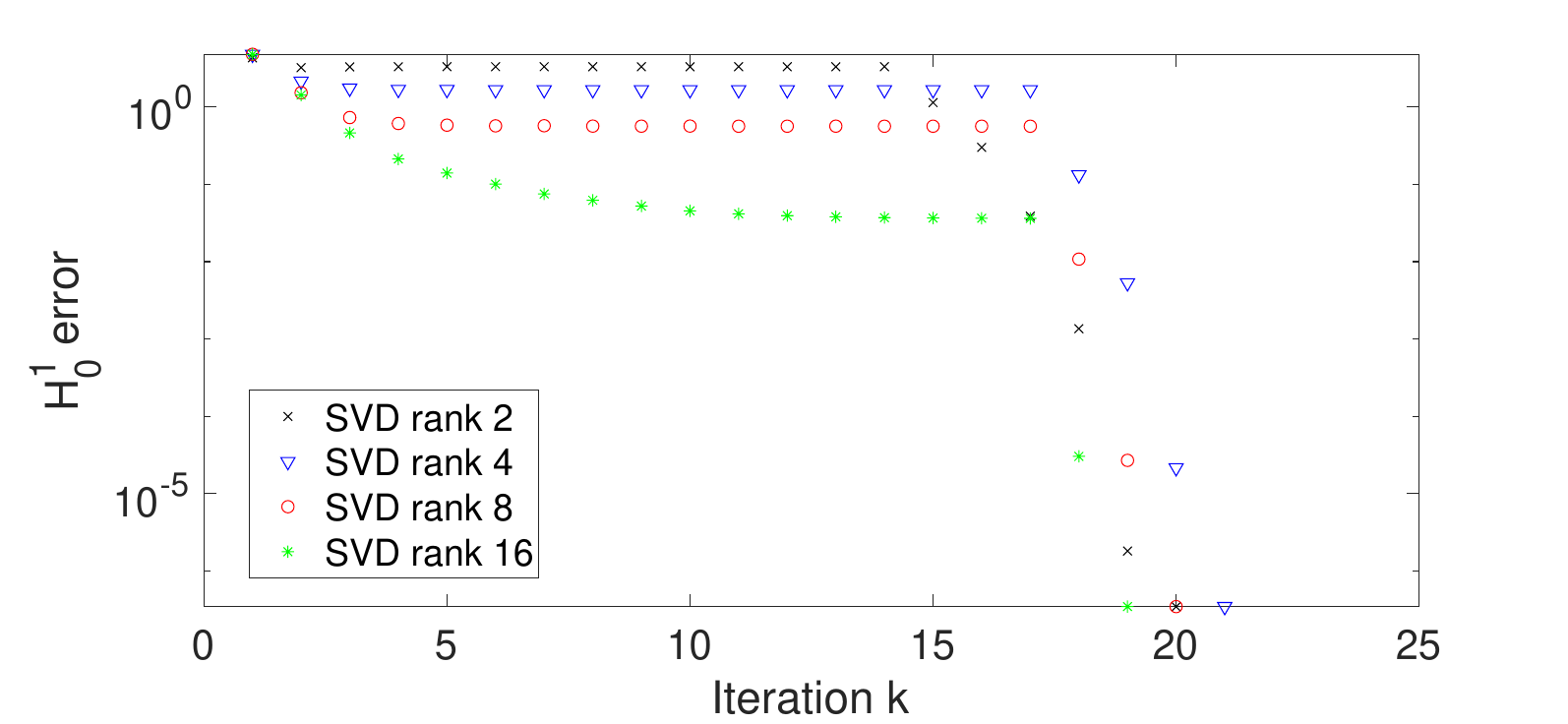}  
\caption{\label{noise2} The plots above show convergence of CDA-IPY for $Re$=5000 2D driven cavity flow with $H$=1/32 and varying SVD rank for partial solution data recovery.  Shown on the left are the errors and residuals for varying ranks, and on the right are errors for a hybrid strategy where CDA-IPY is used while the residual is less than $10^{-2}$, and then (usual) Newton is used afterward.
}
\end{figure}

\begin{table}[H]
\begin{tabular}{lllllll}
 & SVD r=2      & SVD r=4    & SVD r=8    & SVD r=16  & no SVD reduction \\
entries passed    & 260 entries          & 520   entries     & 1040 entries      & 2080   entries  & 2048   \\
\% of original data                      & 0.0660 \% & 0.1319\% & 0.2638\% & 0.5276\% &  0.51948\%
\end{tabular}
\caption{\label{noise3}Shown above are the total entries/dof (floating point numbers) that need to be passed for CDA-IPY using SVD compression.  The total number of dof of the computed solution is 394,242. }
\end{table}

\section{Conclusions}

We have given a detailed analytical and numerical study of the CDA-IPY method for incorporating data into an efficient and effective nonlinear solver.  CDA-Picard was proposed recently for this purpose \cite{LHRV23,GLNR24} and was found to be effective.  We have shown that CDA-IPY is equally effective as CDA-Picard; however, CDA-IPY is much more efficient - the linear systems at each iteration are much simpler, and there is no loss in convergence from using CDA-IPY instead of CDA-Picard. Hence, overall efficiency is significantly improved.  For future work, we plan to consider even more efficient splitting-type methods that are generally much less effective than Picard (e.g. Uzawa and Arrow-Hurwicz) to see if CDA makes them more robust.

\bibliographystyle{siam}
\bibliography{main}

\end{document}